\newif\ifdraft
\definecolor{labelkey}{gray}{0.5}
\newlength{\myarrowsize} 
\newenvironment{diagram*}[2]{%
\[%
\begin{tikzpicture}[>=cmto,baseline=(current bounding box.center),%
	to/.style={->,font=\scriptsize,cap=round},%
	into/.style={cmhook->,font=\scriptsize,cap=round},%
	onto/.style={-cmonto,font=\scriptsize,cap=round},%
	math/.style={matrix of math nodes, row sep=#2, column sep=#1,%
		text height=1.5ex, text depth=0.25ex}]%
}{%
\end{tikzpicture}%
\]%
\ignorespacesafterend%
}
\newcommand{\CC}{\mathbb{C}}
\newcommand{\PP}{\mathbb{P}}
\newcommand*{\sheafhom}{\mathscr{H}\kern -.5pt om}
\def\overbar#1#2#3{{%
	\setbox0=\hbox{\displaystyle{#1}}%
	\dimen0=\wd0
	\advance\dimen0 by -#2 
	\vbox {\nointerlineskip \moveright #3 \vbox{\hrule height 0.3pt width \dimen0}%
		\nointerlineskip \vskip 1.5pt \box0}%
}}
\let\@@seccntformat\@seccntformat
\renewcommand*{\@seccntformat}[1]{%
  \expandafter\ifx\csname @seccntformat@#1\endcsname\relax
    \expandafter\@@seccntformat
  \else
    \expandafter
      \csname @seccntformat@#1\expandafter\endcsname
  \fi
    {#1}%
}
\newcommand*{\@seccntformat@subsection}[1]{%
  \textbf{\csname the#1\endcsname.}
}
\let\@paragraph\paragraph
\renewcommand*{\paragraph}[1]{%
	\vspace{0.3\baselineskip}%
	\@paragraph{\textit{#1}}%
}
\newtheorem{theorem}[equation]{Theorem}
\newtheorem*{theorem*}{Theorem}
\newtheorem{lemma}[equation]{Lemma}
\newtheorem*{lemma*}{Lemma}
\newtheorem*{proposition*}{Proposition}
\newtheorem{conjecture}[equation]{Conjecture}
\theoremstyle{definition}
\newtheorem*{definition*}{Definition}
\newtheorem{remark}[equation]{Remark}
\newtheorem*{example*}{Example}
\newtheorem*{problem*}{Problem}
\theoremstyle{plain}
\newcommand{\theoremref}[1]{\hyperref[#1]{Theorem~\ref*{#1}}}
\newcommand{\lemmaref}[1]{\hyperref[#1]{Lemma~\ref*{#1}}}
\newcommand{\definitionref}[1]{\hyperref[#1]{Definition~\ref*{#1}}}
\newcommand{\propositionref}[1]{\hyperref[#1]{Proposition~\ref*{#1}}}
\newcommand{\conjectureref}[1]{\hyperref[#1]{Conjecture~\ref*{#1}}}
\newcommand{\corollaryref}[1]{\hyperref[#1]{Corollary~\ref*{#1}}}
\newcommand{\exampleref}[1]{\hyperref[#1]{Example~\ref*{#1}}}
\let\old@caption\caption
\renewcommand*{\caption}[1]{%
	\setcounter{figure}{\value{equation}}%
	\stepcounter{equation}%
	\old@caption{#1}\relax%
}
\newcounter{intro}
\newtheorem{intro-conjecture}[intro]{Conjecture}
\newtheorem{intro-corollary}[intro]{Corollary}
\newtheorem{intro-theorem}[intro]{Theorem}
\newcommand{\parref}[1]{\hyperref[#1]{\S\ref*{#1}}}
\newcommand*\if@single[3]{%
  \setbox0\hbox{${\mathaccent"0362{#1}}^H$}%
  \setbox2\hbox{${\mathaccent"0362{\kern0pt#1}}^H$}%
  \ifdim\ht0=\ht2 #3\else #2\fi
  }
\newcommand*\rel@kern[1]{\kern#1\dimexpr\macc@kerna}
\newcommand*\widebar[1]{\@ifnextchar^{{\wide@bar{#1}{0}}}{\wide@bar{#1}{1}}}
\newcommand*\wide@bar[2]{\if@single{#1}{\wide@bar@{#1}{#2}{1}}{\wide@bar@{#1}{#2}{2}}}
\newcommand*\wide@bar@[3]{%
  \begingroup
  \def\mathaccent##1##2{%
    \if#32 \let\macc@nucleus\first@char \fi
    \setbox\z@\hbox{$\macc@style{\macc@nucleus}_{}$}%
    \setbox\tw@\hbox{$\macc@style{\macc@nucleus}{}_{}$}%
    \dimen@\wd\tw@
    \advance\dimen@-\wd\z@
    \divide\dimen@ 3
    \@tempdima\wd\tw@
    \advance\@tempdima-\scriptspace
    \divide\@tempdima 10
    \advance\dimen@-\@tempdima
    \ifdim\dimen@>\z@ \dimen@0pt\fi
    \rel@kern{0.6}\kern-\dimen@
    \if#31
      \overline{\rel@kern{-0.6}\kern\dimen@\macc@nucleus\rel@kern{0.4}\kern\dimen@}%
      \advance\dimen@0.4\dimexpr\macc@kerna
      \let\final@kern#2%
      \ifdim\dimen@<\z@ \let\final@kern1\fi
      \if\final@kern1 \kern-\dimen@\fi
    \else
      \overline{\rel@kern{-0.6}\kern\dimen@#1}%
    \fi
  }%
  \macc@depth\@ne
  \let\math@bgroup\@empty \let\math@egroup\macc@set@skewchar
  \mathsurround\z@ \frozen@everymath{\mathgroup\macc@group\relax}%
  \macc@set@skewchar\relax
  \let\mathaccentV\macc@nested@a
  \if#31
    \macc@nested@a\relax111{#1}%
  \else
    \def\gobble@till@marker##1\endmarker{}%
    \futurelet\first@char\gobble@till@marker#1\endmarker
    \ifcat\noexpand\first@char A\else
      \def\first@char{}%
    \fi
    \macc@nested@a\relax111{\first@char}%
  \fi
  \endgroup
}
\def\CC{{\mathbf C}}
\def\PP{{\mathbf P}}
\newtheorem*{thmA'}{Theorem~A$^\prime$}
\begin{document}

\title{Conjectures on the Kodaira dimension}

\author[M.~Popa]{Mihnea~Popa}
\address{Department of Mathematics, Harvard University, 
1 Oxford Street, Cambridge, MA 02138, USA} 
\email{{\tt mpopa@math.harvard.edu}}

\thanks{MP was partially supported by NSF grant DMS-2040378.}
\date{\today}

\maketitle

\subsection{Introduction}
In this note I propose a few conjectures on the behavior of the Kodaira dimension under morphisms of smooth complex varieties. The statements are of a rather different flavor than the well-known subadditivity conjectured by Iitaka; in some sense they complement it with ideas inspired by the study of the hyperbolicity of 
parameter spaces. Even though there is one statement that implies them all in \S3, I will first discuss in \S2 an intermediate conjecture that is already of substantial interest
and provides a ``superadditivity" counterpart to subaddivity. Concretely, here is essentially the main case of Conjecture \ref{conj3} in the text: if $X$ and $Y$ are smooth projective varieties, and $f\colon X \to Y$ is an algebraic fiber space with general fiber $F$, which is smooth away from a simple normal crossing divisor $D\subset Y$, then we have
$$\kappa (F)  + \kappa \big(\omega_Y (D) \big)\ge \kappa (X).$$
In \S4 I will also state a weaker conjecture on the behavior of Kodaira \emph{co}dimension, which is potentially more accessible. 

I was inspired to look for such statements while thinking about possible extensions of results for families over abelian varieties in \cite{PS1}. Various cases follow from existing results and techniques, while others have been established since the first announcement 
and are listed here.  Moreover, it now looks to be the case that, just as with Iitaka's conjecture, they are implied by the main conjectures of the minimal model program.

All the varieties in this note are defined over $\CC$.

\noindent
{\bf Acknowledgements.} I thank C. Hacon, J. Koll\'ar, F. Meng, S. Mori , S. G. Park, C. Schnell and Z. Zhuang for comments and for answering my questions.

\subsection{A superadditivity conjecture for morphisms between smooth projective varieties}
Recall that Iitaka's  $C_{n,m}$ conjecture predicts that for an algebraic fiber space $f \colon X \to Y$ of smooth projective varieties  (meaning $f$ is a surjective morphism, with connected fibers), we have subadditivity for the Kodaira dimension, i.e.
$$\kappa (X) \ge \kappa (F) + \kappa (Y).$$
For surveys on this conjecture, see \cite{Mori} and \cite{Fujino}.

Here I start by proposing a complementary ``superadditivity" statement in this same, most common, setting. 

\begin{conjecture}\label{conj3}
Let $f: X \rightarrow Y$ be an algebraic fiber space between smooth projective varieties, and let $V \subseteq Y$ be the open subset over which $f$ is smooth. Then 
 $$\kappa (F) + \kappa (V) \ge  \kappa (X).$$
\end{conjecture}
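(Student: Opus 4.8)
The plan is to establish Iitaka's easy addition inequality and then sharpen its base term from $\dim Y$ to $\kappa(V)=\kappa\big(Y,\omega_Y(D)\big)$, where $D\subseteq Y$ is a reduced simple normal crossing divisor with $V=Y\setminus D$. First I would dispose of the degenerate cases: if $\kappa(X)=-\infty$ there is nothing to prove, and if $\kappa(F)=-\infty$ then for every $m$ a section of $\omega_X^{\otimes m}$ restricts to $0$ on each smooth fibre $X_y$ ($y\in V$), because $\omega_X^{\otimes m}\vert_{X_y}\cong\omega_{X_y}^{\otimes m}$; since these fibres are dense in $X$ the section vanishes, so $\kappa(X)=-\infty$ and the inequality holds. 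Hence I may assume $\kappa(F)\ge 0$ and $\kappa(X)\ge 0$.

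Next, for $m$ sufficiently divisible let $\phi=\phi_{|mK_X|}\colon X\dashrightarrow Z$ be the pluricanonical map, so $\dim Z=\kappa(X)$. Restricting global sections to a general fibre $F=X_y$ and using $\omega_X^{\otimes m}\vert_F\cong\omega_F^{\otimes m}$, the map $\phi\vert_F$ factors through the pluricanonical map of $F$, whence $\dim\overline{\phi(F)}\le\kappa(F)$. The subvarieties $\overline{\phi(X_y)}\subseteq Z$ sweep out $Z$ and are parametrised (via the Chow or Hilbert scheme) by the image $W$ of a dominant rational classifying map $\mu\colon Y\dashrightarrow W$, $y\mapsto[\overline{\phi(X_y)}]$. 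Since $Z$ is covered by a $\dim W$-dimensional family of subvarieties of dimension $\le\kappa(F)$, I obtain $\kappa(X)=\dim Z\le\kappa(F)+\dim W$. Taking the crude bound $\dim W\le\dim Y$ recovers easy addition, so the entire content of the conjecture is concentrated in the inequality $\dim W\le\kappa(V)$.

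The hard step is to bound $\dim W$ by $\kappa\big(Y,\omega_Y(D)\big)$. The classifying map $\mu$ is governed by how the projective spaces $\PP\big(H^0(X_y,\omega_{X_y}^{\otimes m})\big)$ twist as $y$ varies over $V$, i.e.\ by the Hodge-type sheaf $\E_m:=f_*\omega_{X/Y}^{\otimes m}$, which is locally free over $V$ with fibre $H^0(X_y,\omega_{X_y}^{\otimes m})$; indeed $H^0(X,\omega_X^{\otimes m})=H^0\big(Y,\E_m\otimes\omega_Y^{\otimes m}\big)$. The base directions of $Z$ are thereby controlled by the positivity of $\det\E_m$ together with $\omega_Y^{\otimes m}$. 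The key input I would invoke is the Viehweg--Zuo/Popa--Schnell positivity: by weak positivity of $\E_m$ (Viehweg, Kawamata) and the logarithmic behaviour of the Deligne canonical extension of the Hodge bundle along $D$, a suitable power of $\det\E_m$ admits, off the locus of constant moduli, an inclusion into a power of $\omega_Y(D)$. Combining this with $\omega_Y^{\otimes m}$, the sections cutting out $W$ pull back into a power of $\omega_Y(D)$; as $\mu$ is dominant this yields $\dim W\le\kappa\big(Y,\omega_Y(D)\big)=\kappa(V)$, finishing the argument. As sanity checks, for an isotrivial family $D=\emptyset$ and $\E_m\vert_V$ is trivial, so $\dim W\le\kappa(Y)$; while for a family of maximal variation the same positivity forces $\omega_Y(D)$ to be big and $\kappa(V)=\dim Y$, reducing to easy addition.

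I expect the inclusion $\det\E_m^{\otimes a}\hookrightarrow\omega_Y(D)^{\otimes b}$ (equivalently, the bound $\Var(f)\le\kappa\big(Y,\omega_Y(D)\big)$ refining Viehweg's conjecture, in the precise form needed to control $\mu$) to be the main obstacle. It is known when the general fibre is of general type, or more generally admits a good minimal model, by the work of Campana--Paun and Popa--Schnell, but for an \emph{arbitrary} fibre $F$ it is open and appears to require the existence of good minimal models (equivalently, abundance); this is consistent with the expectation, stated in the introduction, that the conjecture should follow from the main conjectures of the minimal model program. The remaining bookkeeping---choosing $m$ divisible enough, passing to a common resolution so that $\phi$ and $f$ become morphisms, and comparing the Chow-theoretic $W$ with the moduli image---is routine.
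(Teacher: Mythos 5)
The statement you set out to prove is Conjecture~\ref{conj3}, which is an \emph{open conjecture}: the paper contains no proof of it, only proofs of special cases (Theorem~\ref{thm:conj3}), obtained by citing \cite{MP}, \cite{VZ1}, \cite{PS3}, \cite{Park}, \cite{Campana} together with the reduction in Lemma~\ref{reduction1}. So there is no proof in the paper to compare against, and your proposal has to stand on its own. It does not: it is a strategy sketch whose decisive step is left unestablished, as you yourself concede. The preliminary reductions are fine (the case $\kappa(F)=-\infty$, and the easy-addition-style bound $\kappa(X)\le\kappa(F)+\dim W$ via the classifying map $\mu\colon Y\dashrightarrow W$ for the cycles $\overline{\phi(X_y)}\subseteq Z$), but the entire content of the conjecture is the inequality $\dim W\le\kappa(V)$, and for that you only gesture at an inclusion of a power of $\det f_*\omega_{X/Y}^{\otimes m}$ into a power of $\omega_Y(D)$ without formulating or proving it; in the generality required it is open.

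There is also a concrete misidentification of what that missing input would have to be. You propose to control $\dim W$ by a Kebekus--Kov\'acs-type bound $\Var(f)\le\kappa(V)$ (cf.\ \cite{KK}, \cite{Taji}, \cite{WW}), but $W$ parametrizes the images of the fibers under the Iitaka map of $X$, not their birational isomorphism classes, and these can move even when $\Var(f)=0$: for $X=F\times Y$ with both factors of general type one has $Z=X$, $\overline{\phi(X_y)}=F\times\{y\}$, hence $\dim W=\dim Y$ while $\Var(f)=0$. The conjecture survives there only because $\kappa(V)=\dim Y$, so the positivity you actually need is a bound on $\det\left(f_*\omega_{X/Y}^{\otimes m}\right)\otimes\omega_Y^{\otimes m}$ against $\omega_Y(D)$ in all directions of $W$, not just the moduli directions --- and that is essentially a restatement of the conjecture rather than a reduction of it. In short, your outline follows the known Viehweg--Zuo/Popa--Schnell/Campana circle of ideas that settles the cases where $F$ has a good minimal model (Theorem~\ref{thm:conj3}(5),(6)), but it neither proves a new case nor closes the gap in general; the honest conclusion, consistent with the paper, is that the statement remains conjectural.
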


Recall that the \emph{log Kodaira dimension} $\kappa (V)$ can be defined as follows: after a birational base change we can assume that the complement $D = Y \smallsetminus V$ is a simple normal crossing (SNC) divisor. One then defines
 $$\kappa (V) : = \kappa \big(Y, \omega_Y (D) \big),$$ 
the Iitaka dimension of $\omega_Y (D)$, which is easily checked to be independent of the choice of compactification of $V$ with simple normal crossing boundary.

Conjecture \ref{conj3} will be strengthened later on to one about smooth morphisms between quasi-projective varieties, which 
provides the correction turning the inequality into an equality; see Conjecture \ref{conj-main} below.

\begin{remark}[{\bf Obvious cases}]\label{rmk:obvious}
The  conjecture clearly holds when $\kappa (X) = - \infty$ (so in particular when $\kappa (F) = - \infty$) and when $\kappa (V) = \dim Y$, i.e. $V$ is of log general type. Recall that the Easy Addition lemma, see e.g. \cite[Corollary 2.3(iii)]{Mori}, says that for any algebraic fiber space we have
$$\kappa (F) + \dim Y \ge \kappa (X).$$
\end{remark}

\begin{remark}
It is worth noting that an important class of fiber spaces for which $V$ is of log general type is that of ``moduli" families; more generally, by \cite[Theorem A]{PS2}, relying also on important work on Viehweg's hyperbolicity conjecture in \cite{VZ2}, \cite{CP}, this holds 
for every family with maximal variation such that $F$ admits a good minimal model. The methods used in these works have also been crucial for 
solving other cases of Conjecture \ref{conj3}. 
\end{remark}

\begin{remark}[{\bf Smooth case}]\label{3>4}
In the smooth case, when combined with Iitaka's subadditivity, Conjecture \ref{conj3} leads to the following additivity formula, also generalized later from a different point of view by Conjecture \ref{conj-main}:
\end{remark}

\begin{conjecture}\label{conj4}
 If $f: X \rightarrow Y$ is smooth algbraic fiber space between smooth projective varieties, with general fiber $F$, then
$$\kappa (X) = \kappa (F) + \kappa (Y).$$
\end{conjecture}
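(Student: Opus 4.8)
The plan is to squeeze $\kappa(X)$ between two opposite inequalities: one supplied by the smooth specialization of Conjecture \ref{conj3}, the other by Iitaka's subadditivity.

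First I would observe that when $f$ is smooth everywhere, the locus $V \subseteq Y$ over which $f$ is smooth equals all of $Y$, so that the boundary $D = Y \smallsetminus V$ is empty. Then $Y$ is its own smooth projective compactification of $V = Y$ with (vacuously) simple normal crossing boundary, and the defining formula for the log Kodaira dimension gives
$$\kappa(V) = \kappa\big(Y, \omega_Y(D)\big) = \kappa(Y, \omega_Y) = \kappa(Y).$$

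Next I would apply Conjecture \ref{conj3} to $f$. Using the identification $\kappa(V) = \kappa(Y)$ just made, its conclusion $\kappa(F) + \kappa(V) \ge \kappa(X)$ becomes
$$\kappa(F) + \kappa(Y) \ge \kappa(X),$$
which is one half of the desired equality. For the reverse direction I would invoke Iitaka's $C_{n,m}$ subadditivity conjecture, recalled at the beginning of \S2, namely
$$\kappa(X) \ge \kappa(F) + \kappa(Y)$$
for any algebraic fiber space of smooth projective varieties. Combining the two displays forces $\kappa(X) = \kappa(F) + \kappa(Y)$, exactly the additivity asserted by Conjecture \ref{conj4}.

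The real obstacle is that this deduction is only as strong as its inputs: the argument is unconditional only modulo both Conjecture \ref{conj3} and Iitaka's subadditivity. In particular, the smoothness of $f$ does not by itself remove the need for either bound, so honest progress on Conjecture \ref{conj4} reduces to establishing cases of these two underlying conjectures — for instance when $F$ admits a good minimal model, where the superadditivity side becomes accessible through the hyperbolicity-type results cited above, or under the hypotheses where $C_{n,m}$ is known.
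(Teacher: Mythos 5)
Your derivation is exactly the one the paper gives in Remark~\ref{3>4}: specialize Conjecture~\ref{conj3} to the smooth case (where $V=Y$, so $\kappa(V)=\kappa(Y)$) to get $\kappa(F)+\kappa(Y)\ge\kappa(X)$, and combine with Iitaka's subadditivity for the reverse inequality. You also correctly flag that the statement remains conditional on both inputs, which is why the paper records it as a conjecture rather than a theorem.
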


\begin{remark}[{\bf Variation and the Kebekus-Kov\'acs conjecture}]
Recall that Viehweg's $C_{n,m}^+$ conjecture states that, when $\kappa (Y) \ge 0$, one has
$$\kappa (X) \ge \kappa (F) + {\rm max}\{ {\rm Var} (f), \kappa (Y)\}.$$ 
(See \cite[\S7]{Mori} for a survey.) It is interesting to note that, when combined with $C_{n,m}^+$,  Conjecture \ref{conj3} implies the Kebekus-Kov\'acs conjecture \cite[Conjecture 1.6]{KK}  for such $Y$, i.e the inequality
$$\kappa  (V) \ge {\rm Var}(f).$$
This last conjecture has been verified when the general fiber of $f$ is of general type by Taji \cite{Taji} and Wei-Wu \cite{WW}; the paper \cite{Taji} also addresses the case when the general fiber of $f$ has a good minimal model.

\end{remark}

\begin{remark}[{\bf Domain of general type}]
It is also amusing to spell out the special case when $X$ is of general type. When $f$ is smooth (i.e. $V = Y$), or when $Y$ is not uniruled, this was proved in \cite{PS3}; the full statement is a consequence of a more general result by Park, see \cite[Corollary 1.6]{Park}.
\end{remark}

\begin{theorem}\label{conj-GT2}\label{GT2}
With the notation in Conjecture \ref{conj3}, if $X$ is of general type, then $V$ is of log general type.
\end{theorem}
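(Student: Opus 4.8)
The plan is to reduce the statement to the bigness of $\omega_Y(D)$ and to import that bigness from the theory of positivity of direct images and Hodge bundles. After replacing $X$ and $Y$ by suitable birational models, which affects neither $\kappa(X)$ (a birational invariant) nor $\kappa(V)$ (independent of the compactification by its very definition), I may assume that $D = Y \smallsetminus V$ is a reduced SNC divisor and work with the relative canonical bundle formula $\omega_X = \omega_{X/Y} \otimes f^* \omega_Y$. By definition $V$ is of log general type exactly when $\omega_Y(D)$ is big, so this bigness is the goal.

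The first observation is that the hypothesis forces the fibers to be maximally positive. Applying the Easy Addition lemma of Remark~\ref{rmk:obvious}, $\kappa(F) + \dim Y \ge \kappa(X) = \dim X = \dim F + \dim Y$, hence $\kappa(F) = \dim F$; that is, the general fiber $F$ is itself of general type. This places us exactly in the range where Viehweg's hyperbolicity conjecture is a theorem: for a family with general fibers of general type and maximal variation, the results of Viehweg--Zuo and Campana--Paun produce, for some $N$, a big coherent (invertible after saturation) subsheaf $\mathcal{L} \hookrightarrow \Sym^N \Omega_Y^1(\log D)$, and the Campana--Paun criterion upgrades the existence of such a big subsheaf to the bigness of $\omega_Y(D)$. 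Thus when $\Var(f) = \dim Y$ the conclusion is immediate.

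The substance of the proof is therefore the reduction of the general case to maximal variation. I would pass to the variation fibration: after a further birational modification there is a fibration $h \colon Y \to W$ with $\dim W = \Var(f)$ such that $f$ is birationally isotrivial over a general fiber $Y_w$ of $h$, the reduced family there being birationally a product $F \times Y_w$ (up to an étale base change that does not affect the relevant Kodaira dimensions). The induced family over $W$ has maximal variation and general-type fibers, so the previous paragraph applies and $W$ is of log general type. Since the base $W$ is of log general type, the log analogue of Kawamata's additivity theorem assembles $Y$ from $h$: one gets that $Y$ is of log general type as soon as the fibers $Y_w$ are. Finally, over $Y_w$ the product structure gives $\kappa(X_w) = \kappa(F) + \kappa(Y_w) = \dim F + \kappa(Y_w)$ for $X_w := f^{-1}(Y_w)$, so $Y_w$ is of log general type precisely when $X_w$ is of general type.

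The main obstacle is exactly this last point: a general fiber $X_w$ of the composite $X \to W$ need not a priori inherit general type from $X$, and controlling it is where the difficulty of the theorem is concentrated. When $f$ is smooth, or when $Y$ is not uniruled, one can propagate positivity from $X$ downward, using that $\omega_Y$ is then pseudoeffective together with the finer semipositivity of the direct-image sheaves $f_* \omega_{X/Y}^{\otimes m}$, to force $X_w$ to be of general type; this is the content of the argument in \cite{PS3}. In the presence of a uniruled base the pseudoeffectivity input fails, and one must instead invoke the sharper structural results of Park \cite[Corollary 1.6]{Park}, which rest on the minimal model program, to complete the reduction. I expect that managing the interaction between the variation fibration and the failure of pseudoeffectivity over a uniruled base is the genuinely hard step; everything else is formal once the general-type-of-fibers input from Easy Addition and the Viehweg--Zuo/Campana--Paun package are in place.
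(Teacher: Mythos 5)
First, a calibration: the paper offers no self-contained proof of this theorem. The remark immediately preceding it makes clear that the smooth case and the case of a non-uniruled base are quoted from \cite{PS3}, and that the full statement is quoted from \cite[Corollary 1.6]{Park}. Since your proposal also ends by deferring the hard case to exactly these references, the only substantive question is whether your proposed reduction to maximal variation constitutes a genuine alternative route. It does not, for the reasons below; moreover you have misplaced where the difficulty actually sits.

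The step you single out as ``the main obstacle'' --- that the general fiber $X_w$ of the composite $X \to W$ need not inherit general type from $X$ --- is no obstacle at all: Easy Addition applied to $X \to W$ gives $\kappa(X_w) + \dim W \ge \kappa(X) = \dim X$, hence $\kappa(X_w) = \dim X_w$, by the same one-line computation you used to show $F$ is of general type. The genuine gap is instead the sentence ``the induced family over $W$ has maximal variation and general-type fibers, so the previous paragraph applies and $W$ is of log general type.'' There is no induced family over $W$: the variation fibration only provides a generically finite moduli map from an open subset of $W$, and restricting $f$ to a multisection of $h$ yields a maximal-variation family over a variety mapping generically finitely \emph{onto} $W$, from which log general type does not descend (a genus-two curve surjects onto $\mathbf{P}^1$). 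Producing a boundary $D_W$ on $W$ that is both of log general type and compatible with the Kawamata--Maehara subadditivity step you invoke for $h \colon Y \to W$ is essentially the Kebekus--Kov\'acs conjecture \cite{KK} in an orbifold form, not a formal consequence of the maximal-variation case handled by \cite[Theorem A]{PS2}. Two smaller points: the trivialization over $Y_w$ can indeed be arranged after a finite \emph{\'etale} cover because $\mathrm{Aut}(F)$ is finite for $F$ of general type, but what you then compute is $\kappa(V \cap Y_w)$, the log Kodaira dimension of the \emph{open} fiber, not $\kappa(Y_w)$, so the assembly step must be formulated in the log category throughout; and your description of the mechanism of \cite{PS3} (pseudoeffectivity of $\omega_Y$ plus positivity of direct images) is only a loose paraphrase. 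In short, the correct skeleton is present --- Easy Addition forces $F$ to be of general type, and maximal variation is covered by the Viehweg--Zuo/Campana--P\u{a}un package --- but the descent from the variation fibration is not established, and the theorem should simply be attributed to \cite[Corollary 1.6]{Park} as the paper does.
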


\smallskip

The non-obvious cases of Conjecture \ref{conj3} and Conjecture \ref{conj4} are summarized in the next two theorems.

\begin{theorem}\label{thm:conj3}
Conjecture \ref{conj3}  holds when:
\begin{enumerate}
\item $Y$ is an abelian variety, or more generally a variety of maximal Albanese dimension.\footnote{This means that 
$Y$ admits a generically finite (not necessarily surjective) morphism to an abelian variety.}
\item $Y$ is a curve.
\item $f$ is smooth, and either the general fiber of the Iitaka fibration of $Y$ admits a good minimal model,\footnote{More precisely one needs to assume a conjecture of Campana-Peternell, which is in turn a consequence of the existence of good minimal models.} or $Y$ is uniruled. In particular, it holds when $f$ is smooth and $Y$ is a surface or a threefold.  
\item $f$ is smooth, and $\kappa (Y) \ge \dim Y - 3 \ge 0$.
\item $F$ is of general type, so in particular when $X$ is of general type.
\item $F$ has semiample canonical bundle.
\end{enumerate}
\end{theorem}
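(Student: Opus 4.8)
In every case the assertion of Conjecture~\ref{conj3} is the single inequality $\kappa(X)\le\kappa(F)+\kappa(V)$, so the task is to sharpen the Easy Addition bound $\kappa(X)\le\kappa(F)+\dim Y$ of Remark~\ref{rmk:obvious} by replacing $\dim Y$ with $\kappa(V)=\kappa\big(Y,\omega_Y(D)\big)$. The common engine is Viehweg's weak positivity of the Hodge-theoretic sheaves $f_*\omega_{X/Y}^{\otimes m}$ together with the Viehweg-Zuo and Campana-P\u{a}un construction of big subsheaves of $\Omega_Y^\bullet(\log D)$, which transfers the positivity produced by the family back to $\omega_Y(\log D)$; the whole difficulty is to make this transfer sharp enough to read off exactly $\kappa(V)$ rather than the cruder $\dim Y$ or ${\rm Var}(f)$.

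For cases~(5) and~(6) I work through the variation, first reducing~(6) to~(5). When $\omega_F$ is semiample I pass to the relative canonical model $g\colon Z\to Y$, whose general fiber is the canonical model of $F$, a variety of general type of dimension $\kappa(F)$; the passage from $X$ to $Z$ is governed by the canonical bundle formula for the intermediate fibration $X\dashrightarrow Z$, whose moduli and discriminant terms are controlled because the $\kappa=0$ fibers have good minimal models, reducing the estimate to the general-type case on $Z$. For $F$ of general type the proof follows the same circle of ideas as \theoremref{GT2}, which it contains: the weak positivity of $f_*\omega_{X/Y}^{\otimes m}$ and the associated Viehweg-Zuo sheaf bound the part of $\kappa(X)$ exceeding the fibral $\dim F$ by the Iitaka dimension of $\omega_Y(\log D)$, while the Kebekus-Kov\'acs inequality $\kappa(V)\ge{\rm Var}(f)$ --- known for general-type fibers by Taji and Wei-Wu --- together with the trivial $\kappa(V)\ge\kappa(Y)$ absorbs the variation. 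This yields $\kappa(X)\le\dim F+\kappa(V)$.

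Cases~(3) and~(4) assume $f$ smooth, so $V=Y$ and the claim is the nontrivial half $\kappa(X)\le\kappa(F)+\kappa(Y)$ of the additivity in Conjecture~\ref{conj4}. Here I would restrict the smooth family to the general fiber $G$ of the Iitaka fibration $Y\dashrightarrow Y'$, where $\kappa(G)=0$: the good minimal model hypothesis on $G$ is exactly what permits showing that the restricted smooth family has $\kappa(X_G)=\kappa(F)$, after which Easy Addition for the composite $X\to Y'$ gives $\kappa(X)\le\kappa(X_G)+\dim Y'=\kappa(F)+\kappa(Y)$. The uniruled alternative forces $\kappa(Y)=-\infty$, and the same semipositivity of $f_*\omega_{X/Y}^{\otimes m}$, now read along the rational curves sweeping out $Y$, forces $\kappa(X)=-\infty$. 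Case~(4) is then the unconditional instance: $\kappa(Y)\ge\dim Y-3\ge 0$ makes $\dim G\le 3$, and good minimal models of varieties of Kodaira dimension $0$ are known in dimension at most $3$.

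The remaining cases are where the real work lies. For $Y$ a curve (case~(2)), the log-general-type range $\kappa(V)=1$, i.e. $2g-2+\#D>0$, is already among the obvious cases of Remark~\ref{rmk:obvious}; only $\kappa(V)\le 0$ remains, leaving $V\in\{\PP^1,\ \mathbf{A}^1,\ \mathbf{G}_m,\ E\}$, and there I would use that the determinant of $f_*\omega_{X/Y}^{\otimes m}$ is a nef line bundle on the curve whose degree is carried by the fibers over $D$, forcing $\kappa(X)\le\kappa(F)$ and $\kappa(X)=-\infty$ over $\PP^1$ and $\mathbf{A}^1$. Case~(1), $Y$ of maximal Albanese dimension, is the deepest, and here $f$ need not be smooth: I would pass to the Albanese and use the generic vanishing and Hodge-module positivity of \cite{PS1}, in particular a Chen-Jiang-type decomposition of $f_*\omega_{X/Y}^{\otimes m}$ into pullbacks of ample-twisted GV-sheaves from abelian quotients, to locate the defect from bigness. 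The main obstacle throughout, and sharpest in this last case, is precisely the final bookkeeping step: extracting from this positivity the exact value $\kappa(V)$ --- with the correct contribution of the boundary $D$ --- rather than a weaker bound.
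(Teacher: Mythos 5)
Your proposal should first be measured against what the paper actually does: its proof of this theorem is almost entirely attributional --- part (1) is \cite{MP} (via the techniques of \cite{LPS}), part (2) splits into $g(Y)\ge 2$ (trivial, by Remark~\ref{rmk:obvious}), $Y=\PP^1$ (citing \cite[Theorem 0.2]{VZ1}), and $Y$ elliptic (reduced to part (1)), part (3) is \cite{PS3}, and parts (5), (6) are \cite{PS3}, \cite{Park}, \cite{Campana}. The only self-contained argument is part (4): apply \lemmaref{reduction1} to a smooth model of the Iitaka fibration of $Y$, so that the general fiber $G$ has $\kappa(G)=0$ and $\dim G\le 3$, and invoke (2) and (3) over $G$. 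Your treatment of (3) and (4) --- restriction to the general Iitaka fiber plus Easy Addition on the composite --- is essentially \lemmaref{reduction1} and matches the paper; your description of the generic-vanishing/Chen--Jiang machinery for (1) also names the right engine, though the correct references are \cite{LPS} and \cite{MP}, not \cite{PS1}, and you yourself concede that the decisive step (extracting exactly $\kappa(V)$) is not carried out, so (1) remains a gap by your own admission.

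Where you attempt genuine arguments in place of the citations, several steps would fail. For the curve case (2), the claim that the degree of the nef line bundle $\det f_*\omega_{X/Y}^{\otimes m}$ is ``carried by the fibers over $D$'' is false: smooth families with maximal variation over a hyperbolic open curve have strictly positive degree (this positivity is precisely what Viehweg--Zuo theory exploits), so nothing forces $\kappa(X)\le\kappa(F)$ over $\mathbf{G}_m$ or $\kappa(X)=-\infty$ over $\mathbf{A}^1$ by this route; the genuine input is \cite[Theorem 0.2]{VZ1}, and your method is silent on $Y$ elliptic with $f$ everywhere smooth, which needs the Fourier--Mukai/generic-vanishing case (1), not degree bookkeeping on the curve. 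For the reduction of (6) to (5) via the relative canonical model $Z\to Y$: the smooth locus of the new family can be strictly smaller than $V$, and since log Kodaira dimension only increases when the open set shrinks, applying case (5) to $Z\to Y$ yields a bound by $\kappa(V_Z)\ge\kappa(V)$, the wrong direction; worse, comparing $\kappa(X)$ with $\kappa(Z)$ across the $\kappa=0$ fibration $X\dashrightarrow Z$ is itself an instance of the open superadditivity problem, so the reduction is circular. Campana's actual proof instead establishes birational isotriviality of smooth families with $\kappa=0$, semiample-canonical fibers, together with the general-type case. Finally, in (5) the sentence asserting that weak positivity, Viehweg--Zuo sheaves, and the Kebekus--Kov\'acs inequality ``bound the part of $\kappa(X)$ exceeding $\dim F$ by $\kappa(V)$'' is the statement to be proved, not a derivation: the inequality $\kappa(V)\ge\mathrm{Var}(f)$ of \cite{Taji}, \cite{WW} controls variation, but no implication from $\mathrm{Var}(f)$ to $\kappa(X)-\dim F$ is available without Viehweg's $C^+_{n,m}$, which is open. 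So while your survey of techniques is well oriented, each of the hard cases (1), (2), (5), (6) contains a step that is either conceded, circular, or incorrect as stated, and these are exactly the points the paper outsources to \cite{MP}, \cite{VZ1}, \cite{PS3}, \cite{Park}, \cite{Campana}.
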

\begin{proof}
Part (1) is shown in \cite{MP}, using techniques from \cite{LPS}.

Part (2) is clear when $g (Y) \ge 2$. When $Y  = \mathbf{P}^1$ it follows from \cite[Theorem 0.2]{VZ1}, while when $Y$ is elliptic it follows 
from (1).

Part (3) is established in \cite{PS3}; see Theorem C and Corollaries E, F in \emph{loc. cit.}

Part (4) follows by applying Lemma \ref{reduction1} below to a model $g \colon Y' \to Z$ of the Iitaka fibration 
of $Y$, with $Y'$ and $Z$ smooth. Its general fiber $G$ has dimension at most $3$, hence (2) and (3) apply.

Parts (5) and (6) follow from more general results in \cite{PS3}, \cite{Park} for (5), and \cite{Campana} for both, explained in the next section.
\end{proof}

\begin{theorem}\label{thm:conj4}
Conjecture \ref{conj4} holds when
\begin{enumerate}
\item $f$ is a fiber bundle.
\item $Y$ is of general type.
\item $Y$ is an abelian variety, or more generally a variety of maximal Albanese dimension.
\item $Y$ is a curve.
\item $Y$ is a surface.
\item $Y$ is uniruled.
\item $Y$ is a good minimal model with $\kappa (Y) = 0$.
\item $X$ is a good minimal model with $\kappa (X) = 0$.
\item $F$ is of general type.
\item $F$ has semiample canonical bundle.
\item we assume the conjectures of the MMP.
\end{enumerate}
\end{theorem}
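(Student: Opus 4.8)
The plan is to split the claimed equality $\kappa(X) = \kappa(F) + \kappa(Y)$ into its two opposite inequalities and to supply each from a separate body of results. Because $f$ is smooth its smooth locus is all of $Y$, so $V = Y$, $D = \emptyset$, and $\kappa(V) = \kappa(Y,\omega_Y) = \kappa(Y)$. Hence the smooth case of Conjecture~\ref{conj3} is exactly the superadditivity bound
$$\kappa(F) + \kappa(Y) \ge \kappa(X),$$
whereas Iitaka's $C_{n,m}$ conjecture is the reverse subadditivity bound $\kappa(X) \ge \kappa(F) + \kappa(Y)$. As noted in Remark~\ref{3>4}, Conjecture~\ref{conj4} is the conjunction of the two, so for each item I would produce superadditivity from the matching case of Theorem~\ref{thm:conj3} and subadditivity from a known case of the Iitaka conjecture.

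The matching for the ``assembly'' cases is as follows. For (4), $Y$ a curve, superadditivity is Theorem~\ref{thm:conj3}(2) and subadditivity is Kawamata's $C_{n,1}$; for (5), $Y$ a surface, superadditivity is the surface case of Theorem~\ref{thm:conj3}(3) and subadditivity is the known $C_{n,2}$; for (3), $Y$ of maximal Albanese dimension, superadditivity is Theorem~\ref{thm:conj3}(1) and subadditivity is the known subadditivity for bases of maximal Albanese dimension; for (9), $F$ of general type, and (10), $F$ with semiample canonical bundle, superadditivity is Theorem~\ref{thm:conj3}(5) and (6), while subadditivity is due respectively to Koll\'ar and to Kawamata. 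Case (2), $Y$ of general type, needs only the trivial superadditivity of Remark~\ref{rmk:obvious} (such a base is log general type, $\kappa(V) = \dim Y$) together with subadditivity over a base of general type. Case (6), $Y$ uniruled, is immediate: here $\kappa(Y) = -\infty$ makes subadditivity vacuous, and Theorem~\ref{thm:conj3}(3) forces $\kappa(X) = -\infty$, so both sides equal $-\infty$. Finally (11) records that both inequalities follow from the conjectures of the MMP.

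The cases I expect to require genuinely separate arguments are (1), (7) and (8). For (1), a fiber bundle is isotrivial, so after a suitable generically finite base change the total space becomes birational to a product $F \times Y'$; I would then invoke the classical additivity $\kappa(F \times Y') = \kappa(F) + \kappa(Y')$ together with the compatibility of Kodaira dimension with generically finite covers. Cases (7) and (8) concern good minimal models of Kodaira dimension $0$: there abundance gives $K \equiv 0$ and semiampleness upgrades this to $K \sim_{\QQ} 0$. In (7), where $K_Y \sim_{\QQ} 0$, superadditivity is Theorem~\ref{thm:conj3}(3) (the Iitaka fibration of $Y$ is trivial, with $Y$ itself a good minimal model), while subadditivity $\kappa(X) \ge \kappa(F)$ comes from $K_X \sim_{\QQ} K_{X/Y}$ and the weak positivity of $K_{X/Y}$, which makes the base contribution vanish. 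In (8), where $K_X \sim_{\QQ} 0$, I would feed the Beauville--Bogomolov decomposition of a finite \'etale cover into the fibration to force both $\kappa(F) = 0$ and $\kappa(Y) = 0$, so that $\kappa(X) = 0 = \kappa(F) + \kappa(Y)$.

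I expect the genuine difficulty to lie in the superadditivity direction, that is in Theorem~\ref{thm:conj3} itself, which we are permitted to assume; granting it, the only real obstacle left is to run the structure theory of varieties with $K \sim_{\QQ} 0$ in (7) and (8) precisely enough to control the Kodaira dimensions of both base and fiber, the remaining cases being a matter of citing the appropriate instance of the Iitaka conjecture.
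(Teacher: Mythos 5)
Your overall scheme---superadditivity from Theorem~\ref{thm:conj3} plus a known case of Iitaka subadditivity, as in Remark~\ref{3>4}---is exactly the paper's argument for items (2)--(5), (9) and (10), and your treatment of (6) is a harmless repackaging of the same input from \cite{PS3}. The problems are concentrated in (7), (8) and, to a lesser extent, (1).

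For (7), you assert that once $K_Y \sim_{\QQ} 0$ the subadditivity $\kappa(X)\ge\kappa(F)$ ``comes from $K_X\sim_{\QQ}K_{X/Y}$ and the weak positivity of $K_{X/Y}$.'' This step fails: weak positivity of $f_*\omega_{X/Y}^{\otimes m}$ only produces, for an ample $H$, generic global generation of ${\rm Sym}^{ab}\bigl(f_*\omega_{X/Y}^{\otimes m}\bigr)\otimes H^{b}$, hence sections of $\omega_{X/Y}^{\otimes mab}\otimes f^*H^{b}$; to remove the ample twist one needs additional positivity (bigness of $K_Y$, or of $\det f_*\omega_{X/Y}^{\otimes m}$), which is precisely what is absent when $K_Y\sim_{\QQ}0$. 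If your deduction were valid, $C_{n,m}$ would hold for an arbitrary fiber space over any good minimal model base, which is open. The paper instead quotes \cite[Theorem~H(ii)]{PS3}, a nontrivial result whose proof uses the smoothness of $f$ in an essential way. For (8), the entire content is to rule out $\kappa(Y)=-\infty$ (that $\kappa(F)=0$ is immediate from adjunction), and your appeal to the Beauville--Bogomolov decomposition never addresses this point; note that without smoothness the conclusion is simply false (an elliptic K3 surface fibers over $\PP^1$), so any correct argument must invoke the smoothness of $f$, which your sketch does not. The paper's route is to cite \cite{TZ} to conclude that $Y$ is again a good minimal model with $\kappa(Y)=0$, and then to reduce (8) to (7). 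Finally, for (1) the paper cites the addition formula of \cite{NU} for fiber bundles; your claim that a fiber bundle becomes birational to a product after a generically finite base change requires justification when ${\rm Aut}^0(F)$ is positive-dimensional (e.g.\ abelian fibers, where the relevant cohomology classifying the bundle need not be torsion), and the Nakamura--Ueno argument proceeds by a direct analysis of pluricanonical forms rather than by trivialization.
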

\begin{proof} 
Part (1) is one of the original results on the Iitaka conjecture, obtained (in a more general setting) in \cite{NU}.

For (2), (3), (4), and (5), since we know either from Remark \ref{rmk:obvious} or from the previous statement that Conjecture \ref{conj3} is settled in these cases, Conjecture \ref{conj4} follows as in Remark \ref{3>4}, as in all these cases we know that Iitaka's conjecture holds (see \cite{Kawamata2}, \cite{Viehweg1}, \cite{Kawamata3},  \cite{CaP}, \cite{HPS}).

For part (6), see \cite[Proposition G]{PS3}; the main input is the case $Y = \PP^1$, proved in \cite[Theorem 0.2]{VZ1}.

Part (7) is \cite[Theorem H(ii)]{PS3}. On the other hand, if $X$ is a good minimal model with $\kappa (X) = 0$, then so is $Y$ by \cite{TZ}, hence (8) also follows.

Parts (9) and (10) follow from (5) and (6)  in Theorem \ref{thm:conj3} and the fact that Iitaka's conjecture holds when $F$ is of general type by \cite{Kollar}, and when $F$ has semiample canonical bundle by \cite{Kawamata3}.

Part (11) is \cite[Corollary D]{PS3}; a more careful explanation of what exactly is needed is given in \emph{loc. cit.}
\end{proof}

We note that via standard methods one can perform a reduction on the base, which in particular generates further examples (and was already used in Theorem \ref{thm:conj3}(4)). We restrict the discussion to smooth morphisms for simplicity.

\begin{lemma}\label{reduction1}
 Any smooth algebraic fiber space over $Y$ satisfies Conjecture \ref{conj3} if there is a birational morphism $Y' \to Y$, and a nontrivial fibration $g \colon Y' \to Z$ of smooth projective varieties with general fiber $G$, such that $\kappa (Y) = \dim Z + \kappa (G)$\footnote{This happens for instance if $g$ is a model of the Iitaka fibration of $Y$, or if $Z$ is of general type.} and any smooth fiber space over $G$ satisfies Conjecture \ref{conj3}.
\end{lemma}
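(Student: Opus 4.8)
The plan is to use the birational base change to move the fibration $g$ onto $Y$ itself, then slice $X$ over a general fiber of $g$ so that the hypothesis applies, and finally recover $\kappa(X)$ via the Easy Addition lemma in the $Z$-direction. Note that since $f$ is smooth we have $V = Y$, so the assertion to prove is simply $\kappa(F) + \kappa(Y) \ge \kappa(X)$.

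First I would perform the base change. Write $\mu \colon Y' \to Y$ for the given birational morphism and set $X' := X \times_Y Y'$, with projection $f' \colon X' \to Y'$. Because $f$ is smooth and $Y'$ is smooth projective, $f'$ is again a smooth algebraic fiber space and $X'$ is smooth projective; the projection $X' \to X$ is birational, so $\kappa(X') = \kappa(X)$ and $\kappa(Y') = \kappa(Y)$, while the general fiber of $f'$ is still $F$. Hence it suffices to bound $\kappa(X')$ by $\kappa(F) + \kappa(Y')$, and after renaming we may assume the fibration $g \colon Y \to Z$ is defined on $Y$ itself.

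Next, consider the composite $h := g \circ f \colon X \to Z$ and take a general point $z \in Z$. By generic smoothness, for general $z$ the fiber $G_z := g^{-1}(z)$ is smooth and isomorphic to the general fiber $G$ of $g$, and $X_G := f^{-1}(G_z) = h^{-1}(z)$ is smooth projective; the restriction $f_G \colon X_G \to G_z$ is then a smooth algebraic fiber space whose general fiber is again $F$. Since $G_z \cong G$, the hypothesis that every smooth fiber space over $G$ satisfies Conjecture \ref{conj3} applies to $f_G$ (here the smooth locus is all of $G_z$), giving
$$\kappa(F) + \kappa(G) \ge \kappa(X_G).$$

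Finally I would feed this into the Easy Addition lemma of Remark \ref{rmk:obvious} applied to $h$, after replacing $Z$ by the Stein factorization if necessary; this replacement is harmless because the general fiber $X_G$ is connected, so the finite part of the factorization is birational and $\dim Z$ is unchanged. Easy Addition yields $\kappa(X_G) + \dim Z \ge \kappa(X)$, and combining with the displayed inequality and the hypothesis $\kappa(Y) = \dim Z + \kappa(G)$ gives
$$\kappa(F) + \kappa(Y) = \kappa(F) + \kappa(G) + \dim Z \ge \kappa(X_G) + \dim Z \ge \kappa(X),$$
which is exactly Conjecture \ref{conj3} for $f$. The only points demanding care are the generic-smoothness statements that make $G_z$ and $X_G$ smooth for general $z$, and the verification that $f_G$ is a bona fide smooth fiber space over $G$ so that the inductive hypothesis applies; these are routine, and the arithmetic step $\kappa(Y) = \dim Z + \kappa(G)$ is precisely what makes the two bounds telescope, so I do not anticipate a serious obstacle.
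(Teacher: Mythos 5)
Your proof is correct and follows essentially the same route as the paper's: base change to $Y'$, apply the hypothesis to the restricted smooth fiber space over a general fiber $G$ of $g$ (your $X_G$ is the paper's $H$, the general fiber of $h = g\circ f$), and conclude with Easy Addition applied to $h$ together with $\kappa(Y)=\dim Z+\kappa(G)$. The aside about Stein factorization is unnecessary since $h$ is a composition of fiber spaces and hence already has connected fibers, but this does not affect the argument.
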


\begin{proof}
Let $f \colon X \to Y$ be a smooth algebraic fiber space. After a birational base change on $Y$, we may assume that there is a fiber space $g \colon Y \to Z$ to a smooth projective variety, such that its fiber $G$ satisfies Conjecture \ref{conj3}. (Doing the corresponding  birational base change on $X$ is ok, since the fiber product is still a smooth fibration.) Let's denote by $F$ the general fiber of $f$, and by $H$ the general fiber of $h = g\circ f$. We then have that 
$H \to G$ is a smooth fibration with fiber $F$, so by assumption
$$\kappa( F) + \kappa (G) \ge \kappa (H).$$
Adding $\dim Z$ to both sides of the inequality and using the hypothesis, we obtain
$$\kappa(F) + \kappa (Y) \ge \dim Z + \kappa (H) \ge \kappa (X),$$
where the last inequality is the Easy Addition formula applied to $h$.
\end{proof} 

\begin{remark}
One consequence is that in order to establish Conjecture \ref{conj3} for smooth morphisms, it would be enough to show it over all bases $Y$ with 
$\kappa (Y) = - \infty$ or $\kappa (Y) = 0$.  Indeed, when $\kappa (Y) > 0$, we can simply consider a smooth model $g \colon Y' \to Z$ of the Iitaka 
fibration of $Y$. Then either $Y$ is of general type and the statement is clear, or the fiber $G$ of $g$ is positive dimensional with $\kappa (G) = 0$, and the lemma applies.
\end{remark}

\subsection{The most general conjecture}
The strongest proposal about projective morphisms that I would like to make, easily seen to imply all the other conjectures in this note, is the following:
 
 \begin{conjecture}\label{conj-main}
 If $f: U \rightarrow V$ is smooth projective algebraic fiber space between smooth quasi-projective varieties, with general fiber $F$, then
 $$\kappa (U) = \kappa (F) + \kappa (V).$$
 \end{conjecture}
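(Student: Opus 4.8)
The plan is to prove the asserted equality by establishing the two opposite inequalities separately, treating each with the circle of ideas already behind Conjecture \ref{conj3} and Conjecture \ref{conj4}. First I would fix good models. Embedding $U$ as a closed subvariety of $\PP^N \times V$ (possible since $f$ is projective) and taking closures, one obtains a projective morphism $\bar f \colon X \to Y$ extending $f$, where $Y \supseteq V$ is a smooth compactification with $D_Y = Y \smallsetminus V$ simple normal crossing and where $U = \bar f^{-1}(V)$. After resolving and further blowing up, I may assume $X$ is smooth, $D_X = X \smallsetminus U = \bar f^{-1}(D_Y)_{\mathrm{red}}$ is simple normal crossing, and $\bar f$ is smooth over $V$, so that the relative canonical sheaf restricts to $\omega_F$ on the fibers. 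Then $\kappa(U) = \kappa\big(X, \omega_X(D_X)\big)$ and $\kappa(V) = \kappa\big(Y, \omega_Y(D_Y)\big)$, and the conjecture becomes a log-additivity formula for the pair $(X, D_X)$ fibered over $(Y, D_Y)$ with no horizontal boundary.

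For the inequality $\kappa(U) \ge \kappa(F) + \kappa(V)$ I would invoke the log form of Iitaka's $C_{n,m}$ conjecture for the smooth fibration $\bar f$. Since the general fiber carries no boundary, the fiber contribution is the ordinary $\kappa(F)$, and the desired bound is exactly subadditivity of the log Kodaira dimension for a smooth family. This is the more classical of the two directions: it follows from the weak positivity of the direct images $\bar f_* \omega_{X/Y}^{\otimes m}$ and its Hodge-theoretic refinements (as in \cite{HPS}), which let one multiply a pluri-log-canonical section pulled back from $V$ by a relative pluricanonical section to obtain a section of a power of $\omega_X(D_X)$. It holds in every case where the log Iitaka conjecture is known, for instance when $F$ has semiample canonical bundle by \cite{Kawamata3}.

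For the reverse inequality $\kappa(F) + \kappa(V) \ge \kappa(U)$ --- the genuinely new ``superadditivity'' content --- I would run the base reduction of Lemma \ref{reduction1}, upgraded to the log setting: passing to a smooth model of the Iitaka fibration of $(Y, D_Y)$ reduces the problem to bases with $\kappa(V) = -\infty$ or $\kappa(V) = 0$. The case $\kappa(V) = -\infty$ is handled by the uniruled and maximal-Albanese-dimension inputs behind Theorem \ref{thm:conj3}(1),(3), so the argument concentrates on $\kappa(V) = 0$. There the already-settled cases in which $F$ is of general type or has semiample canonical bundle (\cite{PS3}, \cite{Park}, \cite{Campana}) give the statement unconditionally; in general I would run a relative minimal model program for $\bar f$ and apply the log canonical bundle formula, which forces the equality once the fibers admit good minimal models.

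The step I expect to be the main obstacle is exactly this $\kappa(V) = 0$ case of the superadditivity direction when neither $F$ nor $V$ has any special positivity. One cannot simply quote Conjecture \ref{conj3} for the projective fibration $\bar f \colon X \to Y$: because $D_X \ge 0$ one has $\kappa(U) = \kappa\big(X, \omega_X(D_X)\big) \ge \kappa(X)$, so the log inequality is strictly stronger than the projective one, and the boundary $D_X$ must be carried through the entire argument. Controlling the log-canonical ring of $(X, D_X)$ in this range seems to require the full strength of the minimal model program conjectures, just as the unconditional statement in Theorem \ref{thm:conj4}(11) does.
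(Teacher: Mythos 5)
The statement you are trying to prove is Conjecture~\ref{conj-main}; the paper offers no proof of it, only a list of special cases in which it is known (base of log general type, base compactifying to an abelian variety or a curve, $U$ of log general type, $F$ of general type or with semiample canonical bundle). So there is no ``paper's proof'' to match your argument against, and your text is accordingly a road map rather than a proof: both of your inequalities rest on open conjectures. The first direction, $\kappa(U) \ge \kappa(F) + \kappa(V)$, is the log form of Iitaka's subadditivity conjecture, which is itself unproven in general (the weak positivity of $\bar f_* \omega_{X/Y}^{\otimes m}$ alone is not known to suffice; the known cases are essentially those listed in Remark~\ref{rmk:lgt} and in the discussion of \cite{Kawamata3}, \cite{Campana}). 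The second direction you explicitly concede requires the conjectures of the MMP. You are honest about this, but the net effect is that nothing new is established.

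Two more concrete problems with the reduction you sketch for the superadditivity direction. First, Lemma~\ref{reduction1} is stated and proved only for smooth fiber spaces between smooth \emph{projective} varieties and for the inequality of Conjecture~\ref{conj3}, which bounds $\kappa(X)$, not $\kappa(U) = \kappa\bigl(X, \omega_X(D_X)\bigr)$; as you yourself observe, $\kappa(U) \ge \kappa(X)$, so the projective results you cite in Theorem~\ref{thm:conj3}(1),(3) prove a strictly weaker inequality and do not ``handle'' the log statement. Second, when $\kappa(V) = -\infty$ the conjecture asserts the vanishing $\kappa(U) = -\infty$ outright, which is a genuinely strong statement (already for $V = \PP^1$ minus one or two points this is the content of \cite[Theorem 1.10]{Park}, singled out in the paper as a recent theorem rather than a formal consequence of the uniruled case); your claim that this case is already covered is not correct. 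A log upgrade of Lemma~\ref{reduction1} is plausible and would be a worthwhile thing to write down carefully, but as it stands the passage from the projective reduction to the quasi-projective one is asserted, not proved.
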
 

In other words, in the presence of smooth morphisms, subadditivity in the log version of Iitaka's conjecture should become \emph{additivity}. As noted in the previous section, this is quite open even when $U$ and $V$ are projective. 

There is a mounting body of evidence in favor of this conjecture. To begin with, it is known to hold when $V$ is of log general type, i.e. $\kappa (V) = \dim V$, without any smoothness hypothesis on $f$; see Remark \ref{rmk:lgt} below.  Here are some sample recent results on other cases. The first regards base spaces that compactify to abelian varieties:

\begin{theorem}[{\cite[Theorem A]{MP}}]
Let $f \colon X \to A$ be an algebraic fiber space, with $X$ a smooth projective variety and $A$ an abelian variety. Assume that $f$ is smooth over an open set $V \subseteq A$, and denote $U = f^{-1} (V)$ and the general fiber of $f$ by $F$. Then 
$\kappa (U) = \kappa (V) + \kappa (F)$.
\end{theorem}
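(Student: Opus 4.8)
Since the assertion is an equality I would establish the two inequalities separately. The bound $\kappa(U) \ge \kappa(V) + \kappa(F)$ is the logarithmic form of Iitaka's $C_{n,m}$ over a base that compactifies to an abelian variety, and over such bases it is known; it reflects the semipositivity and generic vanishing of the sheaves $f_\ast \omega_{X/A}^{\otimes m}$ in the tradition of Kawamata. The genuinely new content is the reverse inequality $\kappa(U) \le \kappa(V) + \kappa(F)$ predicted by Conjecture \ref{conj3}, and that is what I would concentrate on.

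After a log resolution I may assume that $f^{-1}(D)$ is SNC, where $D = (A \smallsetminus V)_{\mathrm{red}}$; this changes neither $\kappa(U)$, nor $\kappa(F)$, nor the sheaves $f_\ast \omega_{X/A}^{\otimes m}$. Since $f^{-1}(D)_{\mathrm{red}} \le f^\ast D$, the log-plurigenera of $U$ are dominated by $h^0\bigl(X, \omega_X^{\otimes m}(m f^\ast D)\bigr)$, so it suffices to bound the growth of the latter. Using $\omega_A \cong \O_A$, hence $\omega_X \cong \omega_{X/A}$, the projection formula rewrites this as $h^0\bigl(A, \mathscr{G}_m \otimes \O_A(mD)\bigr)$ with $\mathscr{G}_m := f_\ast \omega_{X/A}^{\otimes m}$, whose generic rank is $h^0(F,\omega_F^{\otimes m}) \sim m^{\kappa(F)}$. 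Note that $D \ne \emptyset$ already forces $\kappa(V) \ge 1$, since a nonzero effective divisor on an abelian variety has positive Iitaka dimension.

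Next I would pass to the Iitaka fibration $q \colon A \to B$ of $\O_A(D)$, so that $\dim B = \kappa(V)$ and $\O_A(D)$ descends, modulo its fixed part, to a big divisor $\bar D$ on $B$. Pushing forward turns the section count into $h^0\bigl(B, \mathscr{H}_m \otimes \O_B(m\bar D)\bigr)$, where $\mathscr{H}_m := (q\circ f)_\ast \omega_{X/A}^{\otimes m}$ is again a pushforward of pluricanonical bundles to an abelian variety and hence carries the Chen--Jiang decomposition of \cite{LPS}, $\mathscr{H}_m \cong \bigoplus_i \bigl( p_i^\ast M_i \otimes \beta_i \bigr)$ with $p_i \colon B \to B_i$ quotients, $M_i$ M-regular on $B_i$, and $\beta_i$ torsion in $\Pic^0(B)$. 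The general fibre $K$ of $q$ is an abelian variety of dimension $< \dim A$ on which $D$ restricts to a divisor of Iitaka dimension $0$, so applying the theorem inductively to the family over $K$ (the base case being the smooth case $V = A$, where $\kappa(X) = \kappa(F)$ by Theorem \ref{thm:conj4}(3)) shows that the general fibre $H$ of $q \circ f$ satisfies $\kappa(H) = \kappa(F)$; hence $\rk \mathscr{H}_m = h^0(H, \omega_H^{\otimes m}) \sim m^{\kappa(F)}$. Dominating $\bar D$ by an ample class $A_B$ on $B$ and twisting each summand by $\O_B(m A_B)$, M-regularity makes the higher cohomology vanish, so that $h^0\bigl(B, p_i^\ast M_i \otimes \beta_i \otimes \O_B(m A_B)\bigr)$ is, up to a bounded factor, $\rk(M_i) \cdot h^0\bigl(B, \O_B(m A_B)\bigr) \sim \rk(M_i)\, m^{\dim B}$. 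Summing over $i$ and using $\sum_i \rk(M_i) = \rk \mathscr{H}_m \sim m^{\kappa(F)}$ yields the bound $m^{\kappa(F)+\dim B} = m^{\kappa(F)+\kappa(V)}$.

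The hard part is precisely this final estimate: proving that the ``fibrewise'' positivity of $\mathscr{H}_m$, which produces the factor $m^{\kappa(F)}$ in the rank, and the ``horizontal'' positivity coming from the boundary $D$, which produces $m^{\kappa(V)}$, enter the exponent only additively rather than reinforcing each other. This is where the full generic-vanishing package is essential: an M-regular sheaf is continuously globally generated, so each Chen--Jiang summand contributes at most $\rk(M_i)\cdot h^0(\text{twist})$ sections, but one must make this comparison uniform in $m$ — the number of summands and the sheaves $M_i$ both vary with $m$ — and reconcile the Chen--Jiang decomposition with the Iitaka fibration of $D$ in the inductive step. Controlling these uniformities is exactly what the techniques of \cite{LPS} are built to do, and it is the crux of the argument.
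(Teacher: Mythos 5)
The paper itself contains no proof of this statement --- it is quoted verbatim from \cite{MP} --- so your sketch has to stand on its own, and I am judging it against what the cited literature actually provides. The overall architecture is right (reduce the boundary to $D = q^\ast \bar D$ for a quotient of abelian varieties $q \colon A \to B$ with $\bar D$ ample and $\dim B = \kappa(V)$, then use the Chen--Jiang decomposition of \cite{LPS}), but there are two genuine gaps. First, you dismiss $\kappa(U) \ge \kappa(F) + \kappa(V)$ as ``known''. What semipositivity in the tradition of Kawamata, together with \cite{CaP} and \cite{HPS}, gives over an abelian base is the projective inequality $\kappa(X) \ge \kappa(F)$; the logarithmic refinement in which $\kappa(A,D)$ is added on the right is itself half of the content of \cite{MP} and requires producing on the order of $m^{\kappa(F)+\kappa(V)}$ sections of $\omega_X^{\otimes m}\bigl(m(f^{-1}D)_{\mathrm{red}}\bigr)$ from the M-regular pieces. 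It cannot be waved away.

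Second, the estimate you yourself identify as the crux rests on a mechanism that does not work. Continuous global generation of an M-regular sheaf is a surjectivity statement about evaluation maps: it gives \emph{lower} bounds on positivity and cannot bound $h^0$ from above. Higher-cohomology vanishing does not rescue the claim either, since it converts $h^0$ into an Euler characteristic involving all Chern classes of $\mathscr{H}_m$, not just its rank, and these are not controlled uniformly in $m$. In fact the inequality $h^0\bigl(B, p_i^\ast M_i \otimes \beta_i \otimes \O_B(m\bar D)\bigr) \le C\cdot \rk(M_i)\cdot m^{\dim B}$ is simply false for a general M-regular sheaf (take $M_i$ a large power of an ample line bundle), so one must control the positivity of the summands and not merely their ranks. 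Note, moreover, that the upper bound admits a much softer route that bypasses this entirely: since $\O_A(mD) = h^\ast\O_B(m\bar D)$ for $h = q\circ f$ and $(K_X + h^\ast \bar D)|_H = K_H$ on a general fiber $H$ of $h$, the generalized Easy Addition lemma gives $\kappa(X, K_X + f^\ast D) \le \kappa(H) + \dim B$, and $\kappa(H) = \kappa(F)$ because $H \to q^{-1}(b)$ is a smooth fibration over an abelian variety (the case covered by Theorem \ref{thm:conj3}(3) and \cite{CaP}). The generic-vanishing machinery is then really needed for the lower bound --- exactly the direction your sketch skips. Finally, a small point: $X \smallsetminus U$ may contain divisors lying over codimension-two strata of $A \smallsetminus V$, so the very first reduction $\kappa(U) \le \kappa(X, K_X + f^\ast D)$ also needs a word of justification.
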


Another is that the conjecture holds when $U$ is of log general type; this is a recent theorem of Park. Extending a result shown in \cite{PS3} in the projective case, he proves the following more general fact:

\begin{theorem}[{\cite[Theorem 1.5]{Park}}]
In the situation of Conjecture \ref{conj-main}, assume that $\kappa (F) \ge 0$. Then 
$$V~ {\rm is ~of ~log ~general ~type} \iff  \kappa (U) = \kappa (F) + \dim V.$$ 
In particular, if $U$ is of log general type, then $V$ is of log general type.
\end{theorem}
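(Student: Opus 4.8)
The ``in particular'' clause follows formally from the equivalence: if $U$ is of log general type then $\kappa(U)=\dim U=\dim F+\dim V$, and since $\dim F\ge \kappa(F)$ while the logarithmic Easy Addition inequality (the analogue of Remark~\ref{rmk:obvious}) gives $\kappa(U)\le \kappa(F)+\dim V$, we are forced to have $\dim F=\kappa(F)$ and $\kappa(U)=\kappa(F)+\dim V$; the equivalence then shows $V$ is of log general type. So the plan is to prove the stated equivalence, and $\kappa(F)\ge 0$ is automatic in the application.

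The forward implication is the easy one. If $V$ is of log general type then $\kappa(U)=\kappa(F)+\kappa(V)=\kappa(F)+\dim V$ by the log general type case of Conjecture~\ref{conj-main}, which is known with no smoothness hypothesis (Remark~\ref{rmk:lgt}); equivalently, one combines the upper bound $\kappa(U)\le \kappa(F)+\dim V$ from Easy Addition with the lower bound $\kappa(U)\ge \kappa(F)+\kappa(V)=\kappa(F)+\dim V$ provided by log subadditivity over a base of log general type.

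The substance is the converse. I would first pass to log smooth compactifications and a morphism $\bar f\colon \bar U\to \bar V$ extending $f$, with $\partial U$ and $\partial V$ simple normal crossing. Since $f$ is a smooth \emph{projective} morphism over $V$, the map $U\to V$ is proper, so after modification $\partial U$ is vertical (supported over $\partial V$); in particular $K_{\bar U}+\partial U$ restricts to $K_F$ on a general fiber. Writing $L:=\omega_{\bar V}(\partial V)$ and $\mathcal{E}_m:=\bar f_*\mathcal{O}_{\bar U}\big(m(K_{\bar U/\bar V}+\partial U-\bar f^*\partial V)\big)$, the projection formula gives, for $m$ sufficiently divisible,
$$H^0\big(\bar U,\,m(K_{\bar U}+\partial U)\big)\;\cong\;H^0\big(\bar V,\,\mathcal{E}_m\otimes L^{\otimes m}\big),$$
so that $\kappa(U)$ is the Iitaka dimension of the graded system $\{\mathcal{E}_m\otimes L^{\otimes m}\}_m$ on $\bar V$. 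Here $\mathcal{E}_m$ has rank $h^0(F,mK_F)\sim c\,m^{\kappa(F)}$, is nonzero because $\kappa(F)\ge 0$, and is weakly positive in its logarithmic form.

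The crux, and the main obstacle, is the converse to Easy Addition: one must show that if $V$ is \emph{not} of log general type then $\kappa(U)<\kappa(F)+\dim V$, equivalently that equality forces $L$ to be big. This is the extremal case of the superadditivity inequality of Conjecture~\ref{conj3}, which is open in general, so the argument must genuinely exploit both $\kappa(F)\ge 0$ and the comparison against the \emph{full} $\dim V$. Following the projective case in \cite{PS3} and extending it to the quasi-projective setting, the strategy is to analyze the Iitaka fibration of $(\bar U,K_{\bar U}+\partial U)$ relative to $\bar f$: equality in Easy Addition forces this fibration to dominate $\bar V$ generically finitely in the base directions, and weak positivity of $\mathcal{E}_m$ contributes exactly the fiberwise $\kappa(F)$ while creating no new positivity transverse to the variation, so the remaining $\dim V$ directions of positivity can only come from $L$ itself. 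Were $L$ not big, restricting the system to a general positive-dimensional fiber $G$ of its Iitaka fibration would cap the growth of sections strictly below $m^{\kappa(F)+\dim V}$, a contradiction. Making this rigorous in the logarithmic category is the hard part; the technical inputs I would invoke are the weak positivity of logarithmic relative pluricanonical direct images (available through \cite{Fujino} and the theory of mixed Hodge modules) and the logarithmic analogues of Viehweg's fiber-product and covering constructions needed to convert weak positivity into the required statement about Iitaka dimensions.
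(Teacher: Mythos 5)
Note first that the paper itself offers no proof of this statement: it is quoted verbatim from \cite[Theorem 1.5]{Park}, so the only meaningful comparison is with the arguments of \cite{Park} and, for the projective antecedent, \cite{PS3}. Your treatment of the formal parts is correct: the forward implication is Kawamata--Maehara over a base of log general type (Remark \ref{rmk:lgt}) combined with log Easy Addition; the ``in particular'' clause does follow from the equivalence plus Easy Addition, with $\kappa(F)\ge 0$ (indeed $F$ of general type) automatic there; and the passage to a compactification with vertical boundary, identifying $\kappa(U)$ with the Iitaka dimension of the system $\mathcal{E}_m\otimes L^{\otimes m}$, is the right setup.

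The gap is in the converse, which is the entire content of the theorem, and your heuristic for it is not merely incomplete but points the wrong way. You assert that weak positivity of $\mathcal{E}_m$ ``creates no new positivity transverse to the variation, so the remaining $\dim V$ directions of positivity can only come from $L$.'' That is false as a general principle: for families of maximal variation the direct images of relative pluricanonical bundles, and their determinants, are typically big on the base --- this is exactly the phenomenon underlying Viehweg--Zuo theory --- so $\mathcal{E}_m\otimes L^{\otimes m}$ can a priori reach Iitaka dimension $\kappa(F)+\dim V$ without $L$ supplying the base directions. For the same reason, restricting to a general fibre $G$ of the Iitaka fibration of $L$ kills the positivity of $L$ but not that of $\mathcal{E}_m|_G$, so the growth of sections is not obviously capped below $m^{\kappa(F)+\dim V}$, and your claimed contradiction does not materialize. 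What actually closes the argument, both in \cite{PS3} and in \cite{Park}, is a hyperbolicity-type dichotomy: either the base positivity comes from $L$, or it comes from the variation of the family over $G$, and in the latter case the Viehweg--Zuo sheaf construction combined with the Campana--P\u{a}un criterion (\cite{VZ2}, \cite{CP}, as packaged in \cite[Theorem A]{PS2} and its logarithmic descendants --- the ``smooth descent of positivity'' of \cite{Park}) forces the log canonical divisor of $G$ itself to be big, contradicting $\kappa(G,L|_G)=0$. This input is nowhere in your sketch, and without it the converse cannot be made rigorous.
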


In \cite[Theorem 1.10]{Park}, Park also completes the proof of the conjecture when $V$ is a curve; due to results in \cite{VZ1} and \cite{MP}, the cases that he needs to establish are when $V$ is $\PP^1$ minus one or two points.

Moreover, the methods of \cite{PS3} show that the conjecture holds when $F$ is of general type, assuming that $\kappa(V) \ge 0$; see Remark 4 in \emph{loc. cit.}  However, more generally and quite importantly, Campana \cite{Campana} has shown that this last assumption can be removed, and furthermore:

\begin{theorem}[{\cite[Theorem 1]{Campana}}]
Conjecture \ref{conj-main} holds when $F$ has semiample  canonical bundle.
\end{theorem}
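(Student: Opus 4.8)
The plan is to prove the equality by treating its two halves separately, since the superadditivity inequality is the only one carrying new content. The subadditivity $\kappa(U) \ge \kappa(F) + \kappa(V)$ is the logarithmic form of Iitaka's conjecture, and because $\omega_F$ is semiample the fiber is already its own good minimal model; the projective statement is Kawamata's theorem \cite{Kawamata3}, and one descends to the quasi-projective/logarithmic version by fixing compatible smooth compactifications $\bar f\colon \bar U \to \bar V$ with simple normal crossing boundaries $\partial U = \bar U \smallsetminus U$ and $D = \bar V \smallsetminus V$, then rerunning the argument with $\omega_{\bar U}(\partial U)$ and $\omega_{\bar V}(D)$. So all of the work goes into the reverse inequality $\kappa(U) \le \kappa(F) + \kappa(V)$, the sharp quasi-projective form of the superadditivity of Conjecture \ref{conj3}.

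For the superadditivity I would use the semiampleness of $\omega_F$ to factor the family through its relative canonical model. After shrinking $V$ and a birational modification one obtains a tower $U \xrightarrow{g} W \xrightarrow{p} V$ in which $p$ is a smooth family of canonically polarized manifolds $F_{\mathrm{can}}$ of dimension $\kappa(F)$, and the general fiber of $g$ has Kodaira dimension $0$, i.e. torsion canonical sheaf. The strategy is then to chain two easier instances of the same additivity statement along this tower: additivity for the general-type family $p$ is the case of Conjecture \ref{conj-main} with canonically polarized fiber, available by the methods of \cite{PS3} and \cite{Park}, and gives $\kappa(W) = \kappa(F_{\mathrm{can}}) + \kappa(V) = \kappa(F) + \kappa(V)$; while additivity for the $\kappa = 0$ family $g$ would give $\kappa(U) = \kappa(W)$. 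Composing the two yields the theorem. Everything must be carried out relative to the fixed compactification, tracking the boundary divisors $D \subset \bar V$, $D_W \subset \bar W$ and $\partial U$ throughout, because it is exactly the boundary that encodes the degenerations.

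The main obstacle is the $\kappa = 0$ step, that is $\kappa(U) \le \kappa(W)$ in its logarithmic form. Here the fibers of $g$ have torsion canonical sheaf, so for a suitable $m$ the direct image $g_\ast \omega_{U/W}^{\otimes m}$ extends over $\bar W$ to a line bundle $\lambda$ of Hodge-theoretic origin, and $\kappa(U)$ is computed as $\kappa\bigl(\bar W, \omega_{\bar W}(D_W) \otimes \lambda\bigr)$ (up to taking appropriate powers). The difficulty is that $\lambda$ is only pseudoeffective, so a priori it can only raise the Iitaka dimension rather than leave it fixed; indeed the analogous bound $\kappa(\bar U) \le \kappa(\bar W)$ is simply \emph{false} in the projective category, as a properly elliptic surface fibered over $\mathbf{P}^1$ (where $\kappa(\bar U) = 1$ but $\kappa(\mathbf{P}^1) = -\infty$) already shows. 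The whole point, therefore, is to show that the positivity of $\lambda$ is \emph{absorbed} by the logarithmic canonical bundle of the base, i.e. that the variation of the $\kappa = 0$ family is already accounted for by the boundary and hence by $\kappa(V)$. This is precisely where one needs the semipositivity of the Hodge bundles with logarithmic poles along the degeneration locus and the comparison of the variation with the log canonical divisor — the circle of ideas behind the Viehweg--Zuo and Campana--P\u{a}un positivity theorems \cite{VZ2}, \cite{CP}. Concretely, I expect the cleanest route is Campana's orbifold formalism: replace $V$ by the orbifold base $(\bar V, \Delta_f)$ whose multiplicities record the multiple fibers of $\bar f$, and invoke orbifold additivity for families with semiample canonical fiber, which reinterprets the moduli contribution as orbifold log-canonical positivity of the base and so converts it into $\kappa(V)$.
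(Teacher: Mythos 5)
The paper offers no proof of this statement---it is quoted as \cite[Theorem 1]{Campana}---so your argument has to stand on its own. The skeleton is sensible: the subadditivity $\kappa(U)\ge\kappa(F)+\kappa(V)$ is indeed the unproblematic half once $F$ has a good minimal model, and factoring through the relative Iitaka fibration $U\to W\to V$ is a natural attack on the superadditivity. But both links of your chain break. First, the intermediate family $p\colon W\to V$ is \emph{not} a family of canonically polarized manifolds, and $\kappa(F_{\mathrm{can}})\ne\kappa(F)$ in general: for a properly elliptic surface $F$ fibered over $\mathbf{P}^1$ (which has semiample canonical bundle, so falls under the hypotheses) one has $F_{\mathrm{can}}=\operatorname{Proj}R(F,K_F)=\mathbf{P}^1$, so $\kappa(F_{\mathrm{can}})=-\infty$ while $\kappa(F)=1$. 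The base of the Iitaka fibration is of general type only as an \emph{orbifold}, via the canonical bundle formula, so your identity $\kappa(W)=\kappa(F_{\mathrm{can}})+\kappa(V)=\kappa(F)+\kappa(V)$ is false as written, and repairing it pushes you into the orbifold category, where the general-type-fiber case you want to quote from \cite{PS3}, \cite{Park} is not available off the shelf. Second, and more fundamentally, the step $\kappa(U)=\kappa(W)$ for the $\kappa=0$ part is itself an instance of Conjecture~\ref{conj-main} with fiber of torsion (hence semiample) canonical bundle---a special case of the very theorem being proved, covered by none of the other known cases listed in the paper. Your resolution of the ``main obstacle'' is to ``invoke orbifold additivity for families with semiample canonical fiber,'' i.e.\ an appeal to an orbifold strengthening of the statement under proof; Campana's orbifold $C_{n,m}$ is a conjecture, known essentially only over general type orbifold bases. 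Showing that the moduli part $\lambda$ is absorbed by $K_{\bar W}+D_W$ is precisely the new content of \cite{Campana}, and it requires inputs (birational isotriviality over special bases, the core decomposition of the base, the positivity theorems of \cite{VZ2}, \cite{CP} in orbifold form) that your sketch names but does not deploy. The decisive step is therefore circular.

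Two further technical points should not be waved away. You construct $W$ ``after shrinking $V$,'' but shrinking the base is not permitted for the inequality $\kappa(U)\le\kappa(F)+\kappa(V)$: deleting a divisor from $V$ can strictly increase both $\kappa(V)$ and $\kappa(U)$ (e.g.\ $\kappa(\mathbf{P}^1)=-\infty$ but $\kappa(\mathbf{P}^1\smallsetminus\{0,1,\infty\})=1$), so a bound proved over a smaller open set does not descend to the original one; you must either produce the relative Iitaka fibration over all of $V$ (using smoothness, deformation invariance of plurigenera and relative base-point freeness) or discard only codimension-two loci. And the relative canonical model has singular fibers in general, so even when $F$ is honestly of general type the family $p$ ceases to be smooth after resolving $W$, and the hypotheses of the results you cite must be re-verified. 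These last issues are repairable with work; the circularity in the $\kappa=0$ step is not.
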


Using the most general result of \cite{Taji}, still under review, the same proof shows that the conjecture actually holds when $F$ has a good minimal model. 

Let's finish by summarizing this discussion:

\begin{theorem}
Conjecture \ref{conj-main} holds when:
\begin{enumerate}
\item $V$ is of log general type \cite{Kawamata1}, \cite{Maehara}.
\item $V$ compactifies to an abelian variety \cite{MP}.
\item $V$ is a curve \cite{VZ1},\cite{MP}, \cite{Park}.
\item $U$ is of log general type \cite{Park}.
\item $F$ is of general type \cite{PS3}, \cite{Campana}.
\item $F$ has semiample canonical bundle (and possibly, more generally, $F$ has a good minimal model) \cite{Campana}.
\end{enumerate}
\end{theorem}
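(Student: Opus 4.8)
The plan is to obtain the statement as a collation of the results recalled in the discussion above, treating each of the six cases separately: for five of them the proof is a direct appeal to a theorem that already asserts the full additivity $\kappa(U) = \kappa(F) + \kappa(V)$, while the remaining case is reduced to one of the others by a short argument. Throughout, the ``$\ge$'' half of additivity is the logarithmic form of Iitaka subadditivity, and the ``$\le$'' half is a logarithmic form of the Easy Addition inequality of Remark \ref{rmk:obvious}, namely $\kappa(U) \le \kappa(F) + \dim V$; the content of each cited theorem is precisely that these two bounds meet.

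I would dispatch parts (1), (2), (3), (5), (6) by citation. Part (1) is the log version of Iitaka's conjecture over a base of log general type, settled in \cite{Kawamata1} and \cite{Maehara}, where no smoothness hypothesis on $f$ is needed: there $\kappa(V) = \dim V$, so Easy Addition gives $\kappa(U) \le \kappa(F) + \dim V = \kappa(F) + \kappa(V)$, matching the subadditive lower bound. Part (2) is \cite[Theorem A]{MP} verbatim. Part (3) combines \cite[Theorem 0.2]{VZ1}, the abelian-variety case \cite{MP}, and \cite[Theorem 1.10]{Park}, which between them cover every smooth curve $V$, the only delicate configurations being $\PP^1$ minus one or two points. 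Part (6) is \cite[Theorem 1]{Campana}, with the good-minimal-model strengthening obtained by the same argument via \cite{Taji}; part (5) is the general-type-fibre case, proved in \cite{PS3} under the extra hypothesis $\kappa(V) \ge 0$ and unconditionally in \cite{Campana}.

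This leaves part (4), which I would treat last and regard as the one genuine assembly step. Suppose $U$ is of log general type, so $\kappa(U) = \dim U = \dim F + \dim V$. The logarithmic Easy Addition bound $\kappa(U) \le \kappa(F) + \dim V$ then forces $\kappa(F) \ge \dim F$, i.e. $F$ is of general type; in particular $\kappa(F) \ge 0$. At this point one may finish in two equivalent ways: either invoke part (5) directly, since $F$ is now of general type, to conclude $\kappa(U) = \kappa(F) + \kappa(V)$; or apply \cite[Theorem 1.5]{Park}, whose hypothesis $\kappa(F) \ge 0$ is now met, to deduce that $V$ is of log general type and then reduce to part (1). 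The one subtlety to record is the degenerate possibility $\kappa(F) = -\infty$: here Easy Addition already gives $\kappa(U) = -\infty$, contradicting log general type once $\dim U \ge 1$, so this case does not occur. The genuine difficulty in the whole theorem is of course imported wholesale from the cited papers --- above all \cite{Campana}, \cite{MP}, and \cite{Park} --- and the main obstacle to a self-contained account would be reproving those results rather than performing the collation.
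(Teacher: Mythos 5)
Your proposal is correct and matches the paper's treatment: the theorem is explicitly a summary of the preceding discussion, with each part resting on the cited results (Kawamata--Maehara, Meng--Popa, Viehweg--Zuo/Park, Popa--Schnell/Campana), and your extra unwinding of part (4) --- that $U$ of log general type forces $F$ of general type via log Easy Addition, after which either part (5) or Park's Theorem 1.5 applies --- is a valid elaboration of the ``in particular'' clause of the quoted Theorem 1.5. No gaps.
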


\noindent
{\bf The log version.}
Even though the logarithmic Kodaira dimension necessarily made an appearance, all the statements above
should be seen as being about the standard case of varieties. Iitaka's conjecture however has more general log analogues; 
see e.g. \cite{Fujino} for a general overview. In this direction, a generalization of Conjecture \ref{conj-main} is as follows:

\begin{conjecture}\label{conj-log}
Let $f \colon X\to Y$ be an algebraic fiber space between smooth projective varieties, and let $E$ be an SNC divisor on $X$ and $D$ an SNC divisor on $Y$ such that ${\rm Supp} (f^*D) \subseteq E$. Assume that $f$ is log-smooth over $V = Y \smallsetminus D$,\footnote{The log-smooth condition means that each stratum of the pair $(X, E)$, including $X$ itself, is smooth over $V$ via $f$.} and let $F$ be a general fiber over a point of $V$.
Then
$$\kappa (X, K_X+ E) = \kappa (Y, K_Y + D) + \kappa (F, K_F + E_F).$$
\end{conjecture}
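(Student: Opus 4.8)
The plan is to prove the two opposite inequalities separately, exactly as the equality in Conjecture \ref{conj-main} splits into Iitaka subadditivity and the superadditivity of Conjecture \ref{conj3}. The lower bound $\kappa(X, K_X+E) \ge \kappa(Y, K_Y+D) + \kappa(F, K_F+E_F)$ is the logarithmic form of the $C_{n,m}$ conjecture for the morphism of pairs $(X,E)\to(Y,D)$, and I would use it as a known input wherever it has been established. All of the new content is then carried by the reverse inequality
$$\kappa(F, K_F+E_F) + \kappa(Y, K_Y+D) \ge \kappa(X, K_X+E),$$
which is the log analogue of Conjecture \ref{conj3} and is what I would focus on.

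For this superadditive direction I would first record a logarithmic Easy Addition. Since $f$ is log-smooth over $V$, a general fiber $F$ is smooth with $E_F = E|_F$ an SNC divisor, and adjunction gives $(K_X+E)\big|_F = K_F + E_F$; restricting pluri-log-canonical sections to $F$ then yields $\kappa(X, K_X+E) \le \dim Y + \kappa(F, K_F+E_F)$. This already settles the conjecture when $(Y,D)$ is of log general type, i.e. $\kappa(Y,K_Y+D) = \dim Y$.

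Next I would reduce the general case to bases with $\kappa(Y, K_Y+D)\in\{-\infty,0\}$ by a log version of Lemma \ref{reduction1}. Passing to a smooth model $g\colon (Y',D')\to Z$ of the log-Iitaka fibration of $(Y,D)$, whose general fiber $(G,D_G)$ satisfies $\kappa(G,K_G+D_G)=0$ and $\dim Z = \kappa(Y,K_Y+D)$, I base-change the family along $Y'\to Y$; after a further log-resolution this preserves log-smoothness and the inclusion $\Supp(f^*D)\subseteq E$, and leaves the general fiber $(F,E_F)$ over $V$ unchanged. Setting $h = g\circ f'$, the general fiber of $h$ is a log-smooth family over $(G,D_G)$, so, granting the conjecture over a base of log Kodaira dimension $0$, one adds $\dim Z$ and applies the log Easy Addition to $h$ to recover the inequality over $Y$.

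The main obstacle is therefore the case $\kappa(Y,K_Y+D)=0$, where one must show $\kappa(F,K_F+E_F)\ge\kappa(X,K_X+E)$. Here I expect to need the Hodge-theoretic positivity machinery of \cite{MP}, \cite{LPS}, \cite{PS3}: writing $K_X+E = f^*(K_Y+D) + (K_{X/Y}+E-f^*D)$, one studies the relative log-pluricanonical sheaves $f_*\mathcal{O}_X\big(m(K_{X/Y}+E-f^*D)\big)$ and must prove a logarithmic Viehweg--Zuo/Fujita semipositivity for them that is compatible with the horizontal part of $E$---the correction along $f^*D$ being the genuinely new feature of the log setting. One then argues that, once the base contributes nothing to the Iitaka dimension, all of $\kappa(X,K_X+E)$ is carried by the fiber. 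Controlling this positivity in the presence of the horizontal boundary, and matching it with the generic-vanishing and Albanese techniques available when the $\kappa=0$ pair $(Y,D)$ compactifies an abelian variety (the unconditional case, as in Theorem \ref{thm:conj3}(1)), is where the real difficulty lies; for a general $\kappa=0$ base the argument would, as in the projective case, have to be made conditional on the minimal model program.
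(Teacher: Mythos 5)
The statement you are trying to prove is a \emph{conjecture}: the paper offers no proof of Conjecture~\ref{conj-log}, and the only case recorded there as known is the one where the base is of log general type, $\kappa(Y,K_Y+D)=\dim Y$, due to Kawamata \cite{Kawamata1} and Maehara \cite{Maehara} (see Remark~\ref{rmk:lgt}). So there is no argument in the paper to compare yours against, and your text should be read as a programme rather than a proof. As a programme it is sensible and consistent with how the paper organizes the projective case: splitting the equality into log subadditivity plus a log analogue of Conjecture~\ref{conj3}, proving a logarithmic Easy Addition by adjunction and restriction of sections (which does work, since Easy Addition holds for an arbitrary line bundle and $(K_X+E)|_F=K_F+E_F$ for a general fiber over $V$), and reducing to bases with $\kappa(Y,K_Y+D)\in\{-\infty,0\}$ via a log version of Lemma~\ref{reduction1}.

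The genuine gap is that the entire open content of the conjecture survives your reductions untouched. First, the subadditive inequality $\kappa(X,K_X+E)\ge\kappa(Y,K_Y+D)+\kappa(F,K_F+E_F)$, which you propose to ``use as a known input wherever it has been established,'' is itself the log Iitaka conjecture and is open in general; you cannot take it for granted. Second, your final paragraph on the case $\kappa(Y,K_Y+D)=0$ consists of expectations (``I expect to need the Hodge-theoretic positivity machinery\dots'', ``one then argues that\dots'') rather than an argument; the semipositivity statement for $f_*\shO_X(m(K_{X/Y}+E-f^*D))$ compatible with the horizontal boundary, and the mechanism by which a $\kappa=0$ base forces all of $\kappa(X,K_X+E)$ onto the fiber, are precisely what is not known. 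Third, the case $\kappa(Y,K_Y+D)=-\infty$ is not addressed at all: there the conjecture asserts $\kappa(X,K_X+E)=-\infty$, which is a strong rigidity statement (compare the isotriviality results of \cite{VZ1} over $\PP^1$ and the delicate cases $V=\PP^1$ minus one or two points treated in \cite{Park}), and nothing in your outline produces it. Finally, the base-change step in your log reduction needs more care than you give it: after pulling back along $Y'\to Y$ and log-resolving, one must re-verify both the log-smoothness over the new $V'$ and the inclusion $\Supp(f'^*D')\subseteq E'$, and check that the quantity $\kappa(Y',K_{Y'}+D')$ one ends up with still computes $\kappa(Y,K_Y+D)$; this is plausible but not automatic. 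In short: your reductions are reasonable and mirror the paper's projective-case strategy, but what remains after them is exactly the conjecture.
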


The reader can specialize this statement to various weaker versions, including those discussed earlier in the standard case. 
For instance, the case of log smooth morphisms to smooth projective varieties corresponds to $D = 0$.

\begin{remark}[{\bf Base of log general type}]\label{rmk:lgt}
Due to a result of Kawamata \cite[Theorem 30]{Kawamata1} for $\kappa (X, K_X + E) \ge 0$, and Maehara \cite[Corollary 2]{Maehara} in general, Conjecture \ref{conj-log} holds when the base is of log general type, i.e. $\kappa (Y, K_Y + D) = \dim Y$.   This is of course a result about Iitaka's conjecture (since the opposite inequality follows from Easy Addition), so it 
does not require the log-smoothness hypothesis.
\end{remark}

\subsection{A conjecture on the Kodaira codimension}\label{scn:codimension}
A weaker but quite interesting consequence of the conjectures in the previous sections can be phrased in terms of the \emph{Kodaira codimension} of a smooth quasi-projective variety $U$, defined in \cite{Mori} as 
$$\kappa c(U) := \dim U - \kappa (U).$$ 

Concretely, at least for smooth fiber spaces, the inequality
$$\kappa (F) + \kappa (V) \ge \kappa (U)$$
predicted as part of Conjecture \ref{conj-main} implies the following: 

\begin{conjecture}\label{conj2}
If $f: U \rightarrow V$ is smooth projective morphism of smooth quasi-projective varieties, then 
$$\kappa c(U) \ge \kappa c (V).$$
\end{conjecture}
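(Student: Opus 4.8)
The plan is to obtain the inequality as a formal consequence of the estimate $\kappa(F) + \kappa(V) \ge \kappa(U)$ predicted by Conjecture \ref{conj-main}, combined with two elementary facts: since $f$ is an algebraic fiber space its general fiber has dimension $\dim F = \dim U - \dim V$, and the Kodaira dimension always satisfies $\kappa(F) \le \dim F$.

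First I would treat the degenerate cases. If $\kappa(U) = -\infty$ then $\kappa c(U) = +\infty$ and there is nothing to prove, so I may assume $\kappa(U) \ge 0$. Feeding this into the predicted inequality $\kappa(F) + \kappa(V) \ge \kappa(U) \ge 0$, and using the convention $a + (-\infty) = -\infty$, forces $\kappa(V) \ge 0$ (and $\kappa(F) \ge 0$); in particular every quantity below is finite. The remaining step is a one-line computation: using $\kappa(F) \le \dim F$,
$$\dim F + \kappa(V) \ge \kappa(F) + \kappa(V) \ge \kappa(U).$$
Substituting $\dim F = \dim U - \dim V$ and rearranging gives $\dim U - \kappa(U) \ge \dim V - \kappa(V)$, which is precisely $\kappa c(U) \ge \kappa c(V)$.

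The entire content of this deduction is the predicted inequality of Conjecture \ref{conj-main}, so the main obstacle is exactly that inequality, which is open in general. What suggests that Conjecture \ref{conj2} should nonetheless be more tractable is that the argument uses only the crude bound $\kappa(F) \le \dim F$ rather than the precise value of $\kappa(F)$: it is enough to prove the a priori weaker estimate $\dim F + \kappa(V) \ge \kappa(U)$. A natural route to this estimate, bypassing Conjecture \ref{conj-main}, would start from the fact that the plurigenera of $U$ are computed on the base by pushing forward relative pluricanonical sheaves, so that $\kappa(U)$ is governed by $\kappa(V)$ together with the positivity of the sheaves $f_*\omega_{U/V}^{\otimes m}$; the content of the desired bound is then that this positivity can raise the growth rate of the plurigenera by at most $\dim F$. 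Making this last point precise, uniformly in $m$, is where I expect the real difficulty to lie.
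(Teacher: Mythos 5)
Your derivation is exactly the one the paper intends: Conjecture~\ref{conj2} is stated there precisely as the formal consequence of the inequality $\kappa(F)+\kappa(V)\ge\kappa(U)$ from Conjecture~\ref{conj-main}, obtained via $\kappa(F)\le\dim F$ and $\dim F=\dim U-\dim V$, so your argument matches the paper's. The only point you gloss over is that the statement allows an arbitrary smooth projective morphism rather than a fiber space; the paper's Remark~\ref{rmk-red} handles this by observing that the finite part of the Stein factorization is \'etale (so $\kappa$ is unchanged), reducing to the fiber space case you treat.
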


I am including this statement because I like its symmetric form, but also because, although weaker, it can still be very useful. For instance, this is what was originally proved in \cite{PS1} when $V$ is an abelian variety. It may also possibly be more approachable in some instances. Note though that while writing the first version of this note, in many significant cases Conjecture \ref{conj2} was known to hold, while the stronger conjectures in the earlier sections were not; however in view of recent work this is not the case anymore.

\begin{remark}\label{rmk-red}
(1) The conjecture holds when $f$ is \'etale, since then $\kappa (U) = \kappa (V)$. When the dimension of the fibers is positive, one may also assume that $f$ is an algebraic fiber space, as then it is not hard to see that in the Stein factorization the finite morphism is in fact \'etale. 

\noindent
(2) The conjecture is not true when $f$ is not smooth. For instance, it is well known that there exist smooth projective surfaces $S$ of general type with $q(S) = 1$, so with a surjective morphism to an elliptic curve; such a morphism cannot be smooth. Or consider any 
base point free pencil on a variety with $\kappa (X) \ge 0$. This induces a morphism $f\colon X \to \mathbf{P}^1$, but such a morphism is never smooth by \cite[Theorem 0.2]{VZ1}. There are many other examples.
\end{remark}

\section*{References}

\begin{biblist}

\bib{Campana}{article}{
    author={Campana, F.},
    title={Kodaira additivity, birational isotriviality and specialness},
    journal={preprint arXiv:2207.05412, v.2},
    date={2022},
}

\bib{CP}{article}{
    author={Campana, F.},
    author={P{\u{a}}un, M.},
    title={Foliations with positive slopes and birational stability of orbifold cotangent bundles},
    journal={Publ. Math. Inst. Hautes \'Etudes Sci.},
    volume={129},
    date={2019},
    pages={1--49},
}

\bib{CaP}{article}{
author={ Cao, J.}, 
author={P\u{a}un, M.},
title={Kodaira dimension of algebraic fiber spaces over abelian varieties}, 
journal={Invent. Math.},
volume={207}, 
date={2017}, 
number={1},
pages={345--387},
}

\bib{Fujino}{article}{
	author={Fujino, Osamu},
	title={Iitaka conjecture -- an introduction},
	journal={SpringerBriefs in Mathematics. Springer, Singapore}, 
	date={2020}, 
	pages={128 pp.},
}

\bib{HPS}{article}{
author={Hacon, C.},
author={Popa, M.},
author={Schnell, C.}, 
title={Algebraic
  fiber spaces over abelian varieties: around a recent theorem by {C}ao and {P}{\u{a}}un},  
journal={Contemporary Mathematics}, 
volume={712},
date={2018}, 
series={Local and global methods in Algebraic Geometry: volume in honor of L. Ein's 60th birthday}, 
pages={143--195},
}

\bib{Kawamata1}{article}{
	author={Kawamata, Yujiro},
	title={Characterization of abelian varieties},
	journal={Compositio Math.}, 
	volume={43},
	date={1981}, 
	number={2},
	pages={253--276},
}

\bib{Kawamata2}{article}{
	author={Kawamata, Y.},
	title={Kodaira dimension of algebraic fiber spaces over curves},
	journal={Invent. Math.}, 
	volume={66},
	date={1982}, 
	number={1},
	pages={51--71},
}

\bib{Kawamata3}{article}{
	author={Kawamata, Yujiro},
	title={Minimal models and the Kodaira dimension of algebraic fiber spaces},
	journal={J. Reine Angew. Math.}, 
	volume={363},
	date={1985}, 
	pages={1--46},
}

\bib{KK}{article}{
	author={Kebekus, Stefan},
	author={Kov\'acs, S\'andor},
	title={Families of canonically polarized varieties over surfaces},
	journal={Invent. Math.}, 
	volume={172},
	date={2008}, 
	number={3},
	pages={657--682},
}

\bib{Kollar}{article}{
 author={Koll{{\'a}}r, J.},
 title={Subadditivity of the {K}odaira dimension: fibers of general type}, 
  journal={Algebraic geometry, {S}endai, 1985, Adv. Stud. Pure
  Math., North-Holland, Amsterdam}, 
  volume={10}, 
  year={1987}, 
  pages={361--398},
  } 
  
  \bib{LPS}{article}{
author={Lombardi, L.},
author={Popa, M.},
author={Schnell, C.}, 
title={Pushforwards of pluricanonical bundles under morphisms to abelian varieties},  
journal={J. Eur. Math. Soc.}, 
volume={22},
date={2020}, 
number={8}, 
pages={2511--2536},
}

\bib{Maehara}{article}{
	author={Maehara, K.},
	title={The weak $1$-positivity of direct image sheaves},
	journal={J. Reine Angew. Math.}, 
	volume={364},
	date={1986}, 
	pages={112--129},
}

\bib{MP}{article}{
	author={Meng, Fanjun},
	author={Popa, Mihnea},
	title={Kodaira dimension of fibrations over abelian varieties},
	journal={preprint arXiv:2111.14165}, 
	date={2021}, 
}

\bib{Mori}{article}{
     author={Mori, S.},
     title={Classification of higher-dimensional varieties},
      journal={Algebraic geometry, {B}owdoin, 1985 ({B}runswick, {M}aine, 1985), Proc.
      Sympos. Pure Math.}, volume={46}, 
      publisher={Amer. Math. Soc., Providence, RI}, 
      date={1987}, 
      pages={269--331}, 
}

\bib{NU}{article}{
	author={Nakamura, I.},
	author={Ueno, K.},
	title={An addition formula for Kodaira dimensions of analytic fibre bundles whose fibre are Moisezon manifolds},
	journal={J. Math. Soc. Japan}, 
	volume={25},
	date={1973}, 
	pages={363--371},
}

\bib{Park}{article}{
	author={Park, Sung Gi},
	title={Logarithmic base change theorem and smooth descent of positivity of log canonical divisor},
	journal={preprint arXiv:2210.02825 }, 
	date={2022}, 
}

\bib{PS1}{article}{
	author={Popa, Mihnea},
	author={Schnell, Christian},
	title={Kodaira dimension and zeros of holomorphic one-forms},
	journal={Ann. of Math.}, 
	volume={179},
	date={2014}, 
	number={3},
	pages={1109--1120},
}

\bib{PS2}{article}{
	author={Popa, Mihnea},
	author={Schnell, C.},
	title={Viehweg's hyperbolicity conjecture for families with maximal variation},
	journal={Invent. Math.}, 
	volume={208},
	date={2017}, 
	number={3},
	pages={677--713},
}

\bib{PS3}{article}{
	author={Popa, Mihnea},
	author={Schnell, Christian},
	title={On the behavior of Kodaira dimension under smooth morphisms},
	journal={preprint arXiv:2202.02825}, 
	date={2022},
}

\bib{Taji}{article}{
	author={Taji, Behrouz},
	title={Birational geometry of smooth families of varieties admitting good minimal models},
	journal={preprint arXiv:2005.01025}, 
	date={2020}, 
}
\bib{TZ}{article}{
	author={Tosatti, Valentino},
	author={Zhang, Yuguang},
	title={Collapsing Hyperk\"ahler manifolds},
	journal={Ann. Sci. \'Ec. Norm. Sup\'er.}, 
	volume={53},
	date={2020}, 
	number={3},
	pages={751--786},
}

\bib{Viehweg1}{article}{
      author={Viehweg, Eckart},
      title={Weak positivity and the additivity of the Kodaira dimension of certain fiber spaces}, 
      journal={Adv. Studies Pure Math.}, 
      volume={1}, 
      date={1983}, 
      pages={329--353},
}

\bib{VZ1}{article}{
	author={Viehweg, Eckart},
	author={Zuo, Kang},
	title={On the isotriviality of families of projective manifolds over curves},
	journal={J. Algebraic Geom.}, 
	volume={10},
	date={2001}, 
	pages={781--799},
}

\bib{VZ2}{article}{
  author={Viehweg, E.},
  author={Zuo, Kang},
  title={Base spaces of non-isotrivial families of smooth minimal models}, 
  journal={Complex geometry ({G}{\"o}ttingen, 2000), Springer, Berlin}, 
  date={2002},
  pages={279--328},
}

\bib{WW}{article}{
author={Wei, Chuanhao},
	author={Wu, Lei},
	title={Isotriviality of smooth families of varieties of general type},
	journal={preprint arXiv:2001:08360, to appear in Manuscripta Math.}, 
        date={2020}, 
}
  \end{biblist}

\end{document}

\end{document}